\newtheorem{thm}{Theorem}[section]
\newtheorem{lem}{Lemma}[section]
\newtheorem{prop}{Proposition}[section]
\newtheorem{defn}{Definition}[section]
\newtheorem{rmk}{Remark}[section]
\newtheorem{eg}{Example}[section]
\begin{document}
\begin{center}
\Large{ \bf Embeddings from noncompact
symmetric spaces to their compact duals}
\end{center}

\begin{center}
Yunxia Chen \& Yongdong Huang \& Naichung Conan Leung
\end{center}

\begin{abstract}
Every compact symmetric space $M$ admits a dual noncompact symmetric space $\check{M}$. When $M$ is a generalized Grassmannian, we can view $\check{M}$ as a open submanifold of it consisting of space-like subspaces \cite{HL}. Motivated from this, we study the embeddings from noncompact symmetric spaces to their compact duals, including
space-like embedding for generalized Grassmannians, Borel embedding for Hermitian symmetric spaces and the generalized embedding for symmetric R-spaces. We will compare these
embeddings and describe their images using cut loci.
\end{abstract}

\bigskip
\section{Introduction}
Riemannian symmetric spaces of semi-simple type come in pairs, the
compact ones and their noncompact duals. The duality between the
compact types and noncompact types not
only provides two viewpoints of the classification problem of
symmetric spaces, but also explains the formal analogy between
spherical trigonometry and hyperbolic trigonometry.

In a recent paper
 \cite{HL} by Y.D. Huang and N.C. Leung, they give a uniform description of classcial
 symmetric spaces (up to finite covers and abelian parts): compact classical symmetric spaces
 described
 as some kinds of Grassmannian and their noncompact duals as the Grassmannians of space-like
 subspaces. That means, for these generalized
Grassmannians, we have the \textbf{space-like embedding} \textbf{\textit{p}} from noncompact
types to their compact duals.

It is also well-known that, for Hermitian symmetric spaces, we have the \textbf{Borel embedding} \textbf{\textit{b}} from noncompact
ones to their compact duals.

Are these two embeddings \textbf{\textit{p}} and \textbf{\textit{b}} compatible with each other? Can they be generalized to all the  symmetric spaces?
How to characterize their images? These are the questions we want to solve in this paper.

One of the key point to solve the first two questions is the fact that both the generalized Grassmannians and the Hermitian symmetric spaces have
the following property: the compact symmetric space admits a connected Lie transformation group larger than the isometry group (definition of symmetric R-space), and hence can be
described as a quotient of a semisimple Lie group by a parabolic subgroup \cite{Na}. Using this characterization, for all the symmetric R-spaces (which include all the compact generalized Grassmannian and compact Hermitian symmetric spaces),
we can construct an open embedding \textbf{\textit{g}} from its noncompact dual to it (section 3.3),
which coincides  with both \textbf{\textit{p}} and \textbf{\textit{b}}.

One of the key point to solve the third question is the fact that the \textbf{unit lattice} of a symmetric R-space is orthonormal
(another characterization of symmetric R-spaces) \cite{Lo}. With this property,
it is easy to compute the cut loci of this symmetric R-space, which ensures us
to construct an explicit embedding \textbf{\textit{f}}
from its noncompact dual to it (section 4.3). From the definition of \textbf{\textit{f}}, we can easily
see that its image is just ``half of the interior points'' of the compact symmetric spaces.

Comparing the embeddings \textbf{\textit{f}} and \textbf{\textit{g}}, roughly speaking,
they both coincide with the space-like embedding in the generalized
Grassmannian case  and the Borel embedding in the Hermitian case,
are they equal to each other? The answer is affirmative by using such
characterization of a symmetric R-space that it is a compact Hermitian symmetric space or a real form
of a compact Hermitian symmetric space.

The organization of this paper is as follows. Section 2 is a brief review of the duality between symmetric spaces. In section 3, we first study the space-like
 embedding \textbf{\textit{p}} for generalized Grassmannians, then review the Borel embedding \textbf{\textit{b}} for Hermitian symmetric spaces, and finally
 we study the embedding \textbf{\textit{g}} (which generalizes both \textbf{\textit{p}} and \textbf{\textit{b}}) for symmetric R-spaces.
In section 4, we construct the embedding \textbf{\textit{f}} using cut loci. At last, in section 5,
we compare these embeddings and show that \textbf{\textit{g}}=\textbf{\textit{f}} for all the symmetric R-spaces except $\text{SO}(n+2)/
\text{SO}(n)\text{SO}(2)$ and $\text{SO}(n+1)\times \text{SO}(m+1)/
S(\text{O}(n) \times \text{O}(m))$.

\bigskip

$\mathbf{Acknowledgements.}$ We are grateful to
J.J. Zhang for many useful comments and discussions.
The first author is supported by the National Natural
Science Foundation of China (No. 11501201). The work of Leung described in this paper was substantially supported by a grant from
the Research Grants Council of the Hong Kong Special Administrative Region, China (Project
No. CUHK14302015) and a direct grant from The Chinese University of Hong Kong (Project No. 4053161).

\bigskip

\section{The duality for symmetric spaces}
Let $M=G/K$ be an irreducible Riemannian symmetric space and
$\mathfrak{g}=\mathfrak{k}\bigoplus\mathfrak{m}$ be the corresponding Cartan
decomposition. For the Lie algebra $\mathfrak{g}$, we have the Killing
form $B$, defined by $B(X,Y):=Tr(adX\cdot adY)$, for $X$, $Y$ in
$\mathfrak{g}$. Then $\mathfrak{k}$ and $\mathfrak{m}$ are
orthogonal with respect to the killing form $B$, and
$B|_{\mathfrak{k}}$ is negative definite, i.e.
$B|_{\mathfrak{k}}<0$. The symmetric space $M=G/K$ is said to be of \textbf{compact} type,
\textbf{noncompact} type, Euclidean type if $B|_{\mathfrak{m}}>0$,
$B|_{\mathfrak{m}}<0$, $B|_{\mathfrak{m}}=0$ respectively. Both the
compact type and noncompact type are called semi-simple type.

There is a remarkable and important duality between Riemannian symmetric spaces of
compact type and noncompact type. Given a noncompact symmetric space
$M=G/K$ with Cartan decomposition $\mathfrak{g}=\mathfrak{k}\bigoplus\mathfrak{m}$. Define
$\check{\mathfrak{g}}=\mathfrak{k}\bigoplus \sqrt{-1}\mathfrak{m}$,
then the Lie algebra pair $(\check{\mathfrak{g}},\mathfrak{k})$ gives
us a finite number of compact symmetric spaces which have the same universal
covering space. We call these compact symmetric spaces the compact
duals of $M$. Conversely, given a compact symmetric space, we can
get its noncompact dual in the similar way, the noncompact dual is unique
since all the noncompact symmetric spaces are simply-connected.

The duality between the
compact type  and noncompact type  not
only provides two viewpoints of the classification problem of
symmetric spaces, but also explains the formal analogy between
spherical trigonometry and hyperbolic trigonometry.

\begin{eg} (Page 242 in \cite{H})
The sphere $S^2=SO(3)/SO(2)$ is dual to the two dimensional Lobachevsky space $H^2=SO_0(2,1)/SO(2)$.

For the sphere, the formulas
\[
\frac{\sin{a}}{\sin{A}}=\frac{\sin{b}}{\sin{B}}=\frac{\sin{c}}{\sin{C}}
\]
\[
\cos{a}=\cos{b}\cos{c}+\sin{b}\sin{c}\cos{A}
\]
hold for a geodesic triangle with angles A, B, C and sides of length a, b, c.

For the two dimensional Lobachevsky space, the corresponding formulas are
\[
\frac{\sinh{a}}{\sin{A}}=\frac{\sinh{b}}{\sin{B}}=\frac{\sinh{c}}{\sin{C}}
\]
\[
\cosh{a}=\cosh{b}\cosh{c}+\sinh{b}\sinh{c}\cos{A}
\]

Since $\sinh{iz}=i\sin{Z}$ and $\cosh{iz}=\cos{z}$, the two sets of formulas correspond to each other under the substitution
$a\rightarrow ia$, $b\rightarrow ib$, $c\rightarrow ic$.

\end{eg}

Besides that, we can use the duality to study many aspects of Riemannian symmetric spaces such as polar actions on Riemannian symmetric spaces \cite{Ko},
Fourier transforms on Riemannian symmetric spaces \cite{B} and so on.

\bigskip

\section{The embeddings from Lie group aspect}
In this section, we will first study the space-like embedding \textbf{\textit{p}} for generalized Grassmannians,
 then review the Borel embedding \textbf{\textit{b}} for Hermitian symmetric spaces, and finally show that the embedding \textbf{\textit{g}}
for symmetric R-spaces is a generalization of these two embeddings \textbf{\textit{p}} and \textbf{\textit{b}}.

\subsection{Space-like embedding for generalized Grassmannians}
In this subsection, we will first review the
Grassmannian description of classical symmetric spaces given by Y.D. Huang and N.C. Leung in \cite{HL} and then
study the corresponding space-like embedding.

For the classical compact
symmetric spaces (Here classical means the corresponding Lie groups are matrix groups),
we have a uniform description of them as generalizations of Real Grassmannian
$\text{O}(n+m)/
\text{O}(n)\text{O}(m)=\{L\subset\mathbb{R}^{n+m}: L\cong \mathbb{R}^n\}$ using two
(associative) normed division algebras.
That is, the four types of compact generalized Grassmannians: (i) Grassmannian, (ii)
Lagrangian Grassmannian, (iii) double Lagrangian Grassmannian, (iv)
maximum isotropic Grassmannian give us all possible types of irreducible
classical compact symmetric spaces up to a finite cover and some
abelian part \cite{HL}. We list all these compact symmetric spaces in the following:

First type: \emph{Grassmannians}
\[
Gr_{\mathbb{AB}}\left(  k,n\right)  =\left\{  \left(
\mathbb{A\otimes B}\right)  ^{k}\subset\left(  \mathbb{A\otimes
B}\right)  ^{n}\right\}  .
\]

\begin{center}
\renewcommand{\arraystretch}{2}
\begin{tabular}
[c]{|c||c|c|c|}\hline $\mathbb{A}\backslash\mathbb{B}$ &
$\mathbb{R}$ & $\mathbb{C}$ & $\mathbb{H}$
\\\hline\hline $\mathbb{R}$ &
$\displaystyle\frac{\text{O}(n)}{\text{O}(k)\text{O}(n-k)}$ &
$\displaystyle\frac{\text{U}(n)}{\text{U}(k)\text{U}(n-k)}$ &
$\displaystyle\frac{\text{Sp}(n)}{\text{Sp}(k)\text{Sp}(n-k)}$
\\\hline $\mathbb{C}$ &
$\displaystyle\frac{\text{U}(n)}{\text{U}(k)\text{U}(n-k)}$ &
$\displaystyle\frac{\text{U}(n)^{2}}{\text{U}(k)^{2}\text{U}(n-k)^{2}}$&
$\displaystyle\frac{\text{U}(2n)}{\text{U}(2k)\text{U}(2n-2k)}$
\\\hline $\mathbb{H}$ &
$\displaystyle\frac{\text{Sp}(n)}{\text{Sp}(k)\text{Sp}(n-k)}$ &
$\displaystyle\frac{\text{U}(2n)}{\text{U}(2k)\text{U}(2n-2k)}$ &
$\displaystyle\frac{\text{O}(4n)}{\text{O}(4k)\text{O}(4n-4k)}$
\\\hline
\end{tabular}
\newline(Table: C1)
\end{center}

\bigskip

Second type: \emph{Lagrangian Grassmannians}
\[
LGr_{\mathbb{AB}}\left(  n\right)  =\left\{  \left(  \frac{\mathbb{A}}%
{2}\mathbb{\otimes B}\right)  ^{n}\subset\left(  \mathbb{A\otimes
B}\right) ^{n}\right\}  ,
\]
where $\frac{\mathbb{A}}{2}$ denotes $\mathbb{R},\mathbb{C}$ when
$\mathbb{A}$ is $\mathbb{C},\mathbb{H}$ respectively.

\begin{center}
\renewcommand{\arraystretch}{2}
\begin{tabular}
[c]{|c||c|c|c|}\hline $\mathbb{A}\backslash\mathbb{B}$ &
$\mathbb{R}$ & $\mathbb{C}$ & $\mathbb{H}$
\\\hline\hline $\mathbb{C}$ &
$\displaystyle\frac{\text{U}(n)}{\text{O}(n)}$ &
$\displaystyle\frac{\text{U}(n)^{2}}{\text{U}(n)}$ &
$\displaystyle\frac {\text{U}(2n)}{\text{Sp}(n)}$
\\\hline $\mathbb{H}$ &
$\displaystyle\frac{\text{Sp}(n)}{\text{U}(n)}$ &
$\displaystyle\frac{\text{U}(2n)}{(\text{U}(n)^{2})}$ &
$\displaystyle\frac {\text{SO}(4n)}{\text{U}(2n)}$ \\\hline
\end{tabular}
\newline(Table: C2)
\end{center}

\bigskip

Third type: \emph{Double Lagrangian Grassmannians}
\[
LLGr_{\mathbb{AB}}(n)=\left\{  \left(\frac{\mathbb{A}}{2}\otimes\frac{\mathbb{B}}%
{2}\right)^{n}\oplus \left(j_{1}\frac{\mathbb{A}}{2}\otimes j_{2}\frac{\mathbb{B}}{2}%
\right)^{n}\subset\left(  \mathbb{A\otimes B}\right)  ^{n}\right\} ,
\]
where $j_{1}\in\mathbb{A}$ is $i$, $j$  when $\mathbb{A}$ is
$\mathbb{C}$, $\mathbb{H}$  respectively, and $j_{2}\in\mathbb{B}$
is similar.

\begin{center}
\renewcommand{\arraystretch}{2}
\begin{tabular}
[c]{|c||c|c|}\hline $\mathbb{A}\backslash\mathbb{B}$ & $\mathbb{C}$
& $\mathbb{H}$
\\\hline\hline $\mathbb{C}$ &
$\displaystyle\frac{\text{U}(n)^{2}}{\text{O}(n)^{2}}$ &
$\displaystyle\frac{\text{U}(2n)}{\text{O}(2n)}$
\\\hline $\mathbb{H}$ &
$\displaystyle\frac{\text{U}(2n)}{\text{O}(2n)}$ &
$\displaystyle\frac{\text{SO}(4n)}{\text{S(O}(2n)^{2})}$
\\\hline
\end{tabular}
\newline(Table: C3)
\end{center}

Fourth type: \emph{Compact (semi-)simple Lie groups}
\[
G_{\mathbb{AB}}\left(  n\right)  =\left\{  \left(  \mathbb{A\otimes
B}\right) ^{n}\subset\left(  \mathbb{A\otimes B}\right)
^{n}\oplus\left( \mathbb{A\otimes B}\right)  ^{n}\right\}
^{\sigma_{g^{\prime}}},
\]
here $\sigma_{g^{\prime}}$ is an involution induced by a canonical
symmetric
$2$-tensor on $(\mathbb{A}\otimes\mathbb{B})^{n}\oplus(\mathbb{A}%
\otimes\mathbb{B})^{n}$.
\begin{center}
\renewcommand{\arraystretch}{1.3}
\begin{tabular}
[c]{|c||c|c|c|}\hline $\mathbb{A}\backslash\mathbb{B}$ &
$\mathbb{R}$ & $\mathbb{C}$ & $\mathbb{H}$
\\\hline\hline $\mathbb{R}$ &
$\text{SO}(n)$ & $\text{U}(n)$ & $\text{Sp}(n)$
\\\hline $\mathbb{C}$ &
$\text{U}(n)$ & $\text{U}(n)^{2}$ & $\text{U}(2n)$
\\\hline $\mathbb{H}$ &
$\text{Sp}(n)$ & $\text{U}(2n)$ & $\text{SO}(4n)$ \\\hline
\end{tabular}
\newline(Table: C4)
\end{center}

This uniform description give us many new insights to the intimate relationships
among different symmetric spaces and to the geometries of special holonomy \cite{LNC1}\cite {LNC2}\cite{LNC3}.

\begin{rmk}
From the above tables $(C1)-(C4)$, we obtain all the classical
compact symmetric spaces up to some abelian part and a finite cover.
Note that in the rest of this paper,
when we say \textbf{generalized Grassmannian}, we only mean those symmetric
spaces in these tables and their noncompact duals.
\end{rmk}

In the generalized Grassmannian case, the noncompact
symmetric spaces can be described as space-like Grassmannians (see the arguments below or \cite{HL}). That
means they are open submanifolds of their compact duals consisting
of space-like linear subspaces. Hence
we can always embed the noncompact generalized Grassmannians to their compact duals.
We will use \textbf{\textit{p}} to denote this space-like embedding in
the generalized Grassmannian case.

\begin{eg}
Real Grassmannian manifold $M^c=\text{O}(n+m)/ \text{O}(n)\text{O}(m)=\{L\subset\mathbb{R}^{n+m}: L\cong \mathbb{R}^n\}$
$(m\geqslant n)$.

The noncompact dual of $M^c$
is given by $M^n=\text{O}(n,m)/
\text{O}(n)\text{O}(m)$.

Define a metric $g$ on $\mathbb{R}^{n+m}$ to be $g((u_1,\cdots,
u_{n+m})^t,(v_1,\cdots, v_{n+m})^t)=u_1v_1+\cdots
+u_nv_n-u_{n+1}v_{n+1}\cdots -u_{n+m}v_{n+m}$, denote $M^+:=\{L \in
M^c: g|_L > 0\}$, such $L \in
M^c$ with the property that $g|_L > 0$ is called a space-like subspace.

\bigskip
\textbf{Fact 1}: $M^+ \cong \text{O}(n,m)/ \text{O}(n)\text{O}(m)=M^n$.
\begin{proof}
First we write the subspace $L$ in the form of matrix: Choose basis of $L$, namely
$\{l_1, \cdots ,l_n\}$, then we have $L=[l_1, \cdots, l_n]$, a
$(n+m)\times n$ matrix.

The action of $\text{O}(n,m)$ on $M^+$ is given by $A \cdot L=AL$ as
matrix product.

If $X \in L \in M^+$, i.e. $X^t \left(
\begin{array}{cc}
I & 0\\
0 & -I \end{array} \right) X> 0$, then $(AX)^t \left(
\begin{array}{cc}
I & 0\\
0 & -I \end{array} \right) (AX)=X^tA^t \left(
\begin{array}{cc}
I & 0\\
0 & -I \end{array} \right) AX=X^t \left(
\begin{array}{cc}
I & 0\\
0 & -I \end{array} \right) X> 0$, for any $ A \in \text{O}(n,m)$.

Hence $A \cdot L \in M^+$, for any $ A \in \text{O}(n,m)$,
$L \in M^+$.

Transitive: Write $L_o=[e_1, \cdots, e_n]= \left[
\begin{array}{cc}
I_{n\times n} \\
0_{m \times n}  \end{array} \right]$. For any $ L \in M^+$, we
can choose a basis of $L$, such that $L=\left[
\begin{array}{cc}
I_{n\times n} \\
Y_{m \times n}  \end{array} \right]$. Since $L \in M^+$, the matrix
$I-Y^tY$ is positive definite, in particular, it is invertible. Now
want to find $A \in \text{O}(n,m)$ such that $A \cdot L_o=\left(
\begin{array}{cc}
A_1 & A_2\\
A_3 & A_4 \end{array} \right)\left[
\begin{array}{cc}
I \\
0  \end{array} \right]=\left[
\begin{array}{cc}
A_1\\
A_3 \end{array} \right]=\left[
\begin{array}{cc}
I \\
Y \end{array} \right]$. We only need to take $A=\left(
\begin{array}{cc}
(I-Y^tY)^{-\frac{1}{2}} & Y^t(I-YY^t)^{-\frac{1}{2}}\\
Y(I-Y^tY)^{-\frac{1}{2}} & (I-YY^t)^{-\frac{1}{2}} \end{array}
\right)$, note the matrices here can take square roots since they are positive definite symmetric matrices.

Isotropic subgroup: By $A \cdot L_o=\left(
\begin{array}{cc}
A_1 & A_2\\
A_3 & A_4 \end{array} \right)\left[
\begin{array}{cc}
I \\
0  \end{array} \right]=\left[
\begin{array}{cc}
A_1\\
A_3 \end{array} \right]=\left[
\begin{array}{cc}
I \\
0 \end{array} \right]$, we have $A_3=0,A_2=0,A_1 \in \text{O}(n),
A_4 \in \text{O}(m)$. Hence the isotropic subgroup is isomorphic to
$\text{O}(n)\text{O}(m)$.

So we have $M^+ \cong \text{O}(n,m)/ \text{O}(n)\text{O}(m)=M^n$.

\end{proof}

\bigskip

From the above fact, we have the space-like embedding $\textbf{\textit{p}}: M^n \hookrightarrow M^c$ obtained by realized
$M^n$ as the space of all space-like subspaces in $R^{n,m}$.

\textbf{Fact 2}: The embedding \textbf{\textit{p}} is $K$-equivariant.
\begin{proof}
For any $L=\left[
\begin{array}{cc}
I \\
Y \end{array} \right] \in M^n$ and $k=\left(
\begin{array}{cc}
K_1 & 0\\
0 & K_2 \end{array} \right) \in K=O(n)O(m)$,
\[
k \cdot L=\left[
\begin{array}{cc}
I \\
K_2YK_1^{-1} \end{array} \right]
\]
obviously, $\textbf{\textit{p}}(k \cdot L)=k\cdot \textbf{\textit{p}}(L)$.
\end{proof}

\bigskip

Now we describe the embedding $\textbf{\textit{p}}$ from the Lie group theoretical aspect. Since we have
 $M^c\cong GL(n+m, \mathbb{R})/P$, where $P=\{\left(
\begin{array}{cc}
U_{n \times n} & W_{n \times m}\\
0_{m \times n} & V_{m \times m} \end{array} \right) \in GL(n+m, \mathbb{R})\}$,
we can define an embedding $\textbf{\textit{g}}: \text{O}(n,m)/ \text{O}(n)\text{O}(m) \hookrightarrow
GL(n+m, \mathbb{R})/P \cong \text{O}(n+m)/ \text{O}(n)\text{O}(m)$ by $AK\mapsto AP \mapsto ABK$, where
$B \in P$ such that $AB\in \text{O}(n+m)$. We can use the Gram-Schmidt process to find such a $B$.

\textbf{Fact 3}: $\textbf{\textit{g}}=\textbf{\textit{p}}$.
\begin{proof}
Only need to show that for any $A \in \text{O}(n,m)$ , $B \in P$, $AK$ and $ABK$ represent the same subspace,
which is obviously by direct computations.
\end{proof}

\end{eg}

We generalize these properties for real Grassmannians to all generalized Grassmannians.

Recall that, the compact (resp. noncompact) generalized Grassmannians can be viewed as
fixed point sets in the compact (resp. noncompact) Real Grassmannians of some isometric involutions \cite{HL}.
Namely, they are (intersections of) reflective submanifolds
of some real Grassmannians.
Reflective submanifolds of symmetric spaces are studied by S.P. Leung in \cite{L1}\cite{L2}\cite{L3}\cite{L4}.
Since the duality between compact symmetric spaces and noncompact symmetric spaces
is invariant under reflective submanifold and the noncompact real Grassmannian consists of space-like subspaces (Fact 1 of Example 3.1),
we know that if the compact generalized Grassmannian consists of subspaces invariant under some isometric involutions, then
its noncompact dual consists of space-like subspaces which are also invariant under the same isometric involutions (generalization of Fact 1).

Hence we have the space-like embedding \textbf{\textit{p}} defined the same way with the real Grassmannian case. In fact, this space-like
 embedding is just the restriction of the space-like embedding for the corresponding real Grassmannians. From Fact 2 of Example 3.1, we have
the embedding \textbf{\textit{p}} is $K$-equivariant for all the generalized Grassmannians (generalization of Fact 2).

And also for each of these compact generalized Grassmannians $G^c/K$, we can find a connected transformation
group $L$ larger than the isometry group $G^c$ such that $G^c/K \cong L/P$ where
$P$ is a parabolic subgroup of $L$ and $G^c \bigcap P=K$ \cite{KN}\cite{Na}. At this time, for its noncompact dual
$G^n/K$, $G^n$ is also a subgroup of $L$ and $G^n \bigcap P=K$.
The following natural embeddings
\[
G^n \subset L  ~  \textrm{and}  ~   G^c \subset L
\]
induce the natural embeddings
\[
G^n/K \hookrightarrow L/P  ~  \textrm{and}  ~  G^c/K \hookrightarrow L/P.
\]
The latter embedding is in fact an isomorphism, hence we have the following map
\[
\textbf{\textit{g}}: G^n/K \hookrightarrow L/P \cong  G^c/K
\]
\[
AK\mapsto AP\mapsto ABK
\]
where $B \in P$ such that $AB \in G^c$ and such $B$ is unique up to $K$.

Comparing these two embeddings \textbf{\textit{p}} and \textbf{\textit{g}}, they can both be viewed
as the restrictions of \textbf{\textit{p}} and \textbf{\textit{g}} respectively for the corresponding real Grassmannians.
From Fact 3 of Example 3.1, we have
\textbf{\textit{p}}=\textbf{\textit{g}} for all the generalized Grassmannians (generalization of Fact 3).

\bigskip

\subsection{Borel embedding for Hermitian symmetric spaces}
In this subsection, we will review the
Borel embedding for Hermitian symmetric spaces. We refer to
\cite{Mok} for basics about Hermitian symmetric spaces. Hermitian symmetric spaces are
all simply-connected, there is a one-one corresponding between
compact Hermitian symmetric spaces and noncompact Hermitian
symmetric spaces. Let $M^c=G^c/K$ be a compact Hermitian symmetric
space and $M^n=G^n/K$ its noncompact dual. The corresponding Cartan
decompositions are $\mathfrak{g}^c=\mathfrak{k}+\mathfrak{m}_c$ and
$\mathfrak{g}^n=\mathfrak{k}+\mathfrak{m}_n$. Complexify
$\mathfrak{g}^c$ and $\mathfrak{g}^n$, get $\mathfrak{g}^c \otimes
\mathbb{C}=\mathfrak{g}^n \otimes
\mathbb{C}=\mathfrak{g}^{\mathbb{C}}=\mathfrak{k}^{\mathbb{C}}+
\mathfrak{m}^{\mathbb{C}}$, denote by $G^{\mathbb{C}}$ the
corresponding Lie group of $\mathfrak{g}^{\mathbb{C}}$. Decompose
$\mathfrak{m}^{\mathbb{C}}$:
$\mathfrak{m}^{\mathbb{C}}=\mathfrak{m}^{+}\oplus \mathfrak{m}^{-}$,
where $\mathfrak{m}^{\pm}$ are $(\pm\sqrt{-1})$-eigenspaces of the
complex structure $J$. Since the group $K$ of isometries on $M^n$ and $M^c$
preserves $J$, $\mathfrak{m}^+$ and $\mathfrak{m}^-$ are invariant
under the adjoint action of $K$, i.e.
$[\mathfrak{k},\mathfrak{m}^+]\subset \mathfrak{m}^+$,
$[\mathfrak{k},\mathfrak{m}^-]\subset \mathfrak{m}^-$, we have the
following lemma \cite{Mok}:

\begin{lem}
$\mathfrak{m}^+$ and $\mathfrak{m}^-$ are abelian subalgebras of
$\mathfrak{g}^{\mathbb{C}}$. Moreover, the complex vector subspace
$\mathfrak{p}=\mathfrak{k}^{\mathbb{C}}+\mathfrak{m}^- \subset
\mathfrak{g}^{\mathbb{C}}$ is a complex Lie subalgebra of
$\mathfrak{g}^{\mathbb{C}}$.
\end{lem}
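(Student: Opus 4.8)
The plan is to verify directly that each of $\mathfrak{m}^+$ and $\mathfrak{m}^-$ is closed under the bracket and in fact abelian, and then assemble the subalgebra claim for $\mathfrak{p}=\mathfrak{k}^{\mathbb{C}}+\mathfrak{m}^-$. The starting observation is that in any symmetric pair $\mathfrak{g}=\mathfrak{k}\oplus\mathfrak{m}$ one has $[\mathfrak{m},\mathfrak{m}]\subset\mathfrak{k}$, and this passes to the complexification, so $[\mathfrak{m}^{\mathbb{C}},\mathfrak{m}^{\mathbb{C}}]\subset\mathfrak{k}^{\mathbb{C}}$. The complex structure $J$ on $\mathfrak{m}$ comes from $\mathrm{ad}(Z_0)$ for a central element $Z_0\in\mathfrak{k}$ (the element defining the Hermitian structure), and $\mathfrak{m}^{\pm}$ are its $\pm\sqrt{-1}$-eigenspaces under the extension of $\mathrm{ad}(Z_0)$ to $\mathfrak{m}^{\mathbb{C}}$. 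The key point I would use is the Jacobi identity together with $[\mathfrak{k},Z_0]=0$: for $X,Y\in\mathfrak{m}^+$, the bracket $[X,Y]$ lies in $\mathfrak{k}^{\mathbb{C}}$, hence commutes with $Z_0$, so $\mathrm{ad}(Z_0)[X,Y]=0$; on the other hand, by Jacobi, $\mathrm{ad}(Z_0)[X,Y]=[\mathrm{ad}(Z_0)X,Y]+[X,\mathrm{ad}(Z_0)Y]=2\sqrt{-1}\,[X,Y]$. Comparing gives $[X,Y]=0$, so $\mathfrak{m}^+$ is abelian; the same argument with eigenvalue $-\sqrt{-1}$ shows $\mathfrak{m}^-$ is abelian.

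Next I would assemble the subalgebra statement. Since $\mathfrak{k}^{\mathbb{C}}$ is already a subalgebra (being the complexification of the subalgebra $\mathfrak{k}$), and $\mathfrak{m}^-$ is abelian by the above, it remains only to check $[\mathfrak{k}^{\mathbb{C}},\mathfrak{m}^-]\subset\mathfrak{m}^-$. But this is exactly the invariance already recorded in the text just before the lemma: $[\mathfrak{k},\mathfrak{m}^-]\subset\mathfrak{m}^-$, which complexifies to $[\mathfrak{k}^{\mathbb{C}},\mathfrak{m}^-]\subset\mathfrak{m}^-$. Therefore $\mathfrak{p}=\mathfrak{k}^{\mathbb{C}}+\mathfrak{m}^-$ is closed under the bracket, and since it is manifestly a complex vector subspace of $\mathfrak{g}^{\mathbb{C}}$, it is a complex Lie subalgebra. (One can also remark symmetrically that $\mathfrak{k}^{\mathbb{C}}+\mathfrak{m}^+$ is a complex subalgebra, which is what makes the Borel embedding work.)

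The only genuine input beyond formal manipulation is the description of $J$ as $\mathrm{ad}$ of a central element $Z_0$ of $\mathfrak{k}$, together with the fact that such a $Z_0$ exists for an irreducible Hermitian symmetric space — this is standard (see \cite{Mok}) and I would simply cite it. Given that, there is no real obstacle: the whole proof is the eigenvalue bookkeeping in the Jacobi identity. If one wanted to avoid invoking $Z_0$ explicitly, the alternative is to argue abstractly that the $\pm\sqrt{-1}$-eigenspace decomposition of $\mathfrak{m}^{\mathbb{C}}$ under a $K$-invariant orthogonal complex structure is a grading-type decomposition with $[\mathfrak{m}^+,\mathfrak{m}^+]$ landing in the ``$+2$'' piece, which must be zero because $\mathfrak{k}^{\mathbb{C}}$ carries eigenvalue $0$; but this is the same computation in disguise, so I would present the $Z_0$ version as the cleanest route.

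Thus the main step, and the one I would write out carefully, is the two-line Jacobi computation $2\sqrt{-1}[X,Y]=\mathrm{ad}(Z_0)[X,Y]=0$ for $X,Y\in\mathfrak{m}^{\pm}$; everything else is citing already-established facts (the symmetric-pair bracket relations, the $K$-invariance of $\mathfrak{m}^{\pm}$, and that $\mathfrak{k}^{\mathbb{C}}$ is a subalgebra).
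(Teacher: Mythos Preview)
Your argument is correct and is precisely the standard one: use that $J=\mathrm{ad}(Z_0)$ for a central $Z_0\in\mathfrak{k}$, combine the symmetric-pair relation $[\mathfrak{m}^{\mathbb{C}},\mathfrak{m}^{\mathbb{C}}]\subset\mathfrak{k}^{\mathbb{C}}$ with the derivation property of $\mathrm{ad}(Z_0)$ to force $[\mathfrak{m}^{\pm},\mathfrak{m}^{\pm}]=0$, and then close $\mathfrak{p}$ under brackets using the $K$-invariance of $\mathfrak{m}^-$. Note that the paper does not actually supply a proof of this lemma; it simply quotes it from \cite{Mok}, and your write-up is essentially the argument found there, so there is nothing to compare.
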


Denote $P=\exp (\mathfrak{p})$ to be the real Lie subgroup of
$G^{\mathbbm{C}}$ corresponding to $\mathfrak{p}$.

\begin{thm}
(Borel embedding theorem).

The embedding $G^c\hookrightarrow G^{\mathbb{C}}$ induces a
biholomorphism $M^c \cong G^c / K \hookrightarrow G^{\mathbb{C}} /P$
onto the complex homogeneous manifold $G^{\mathbb{C}} / P$. The
embedding $G^n\hookrightarrow G^{\mathbb{C}}$ induces an open
embedding $M^n\cong G^n / K \hookrightarrow G^{\mathbb{C}} / P \cong
M^c$, realizing $M^n$ as an open subset of its compact dual $M^c$.
\end{thm}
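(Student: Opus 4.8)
The plan is to use the standard fact that the multiplication map $\mathfrak{m}^+ \times \mathfrak{k}^{\mathbb{C}} \times \mathfrak{m}^- \to \mathfrak{g}^{\mathbb{C}}$ is a local diffeomorphism near the origin, and to promote this to a global statement about the group $G^{\mathbb{C}}$. First I would establish that the product map $\mathfrak{m}^+ \oplus \mathfrak{p} \to \mathfrak{g}^{\mathbb{C}}$ is a linear isomorphism of vector spaces: this is immediate from $\mathfrak{g}^{\mathbb{C}} = \mathfrak{k}^{\mathbb{C}} \oplus \mathfrak{m}^+ \oplus \mathfrak{m}^-$ together with Lemma~3.1, which tells us $\mathfrak{p} = \mathfrak{k}^{\mathbb{C}} \oplus \mathfrak{m}^-$ is a subalgebra and $\mathfrak{m}^+$ is abelian. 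Since $\mathfrak{m}^+$ is abelian, $\exp\colon \mathfrak{m}^+ \to G^{\mathbb{C}}$ is a homomorphism onto a closed complex abelian subgroup $M^+ := \exp(\mathfrak{m}^+)$, and I would show that the map $M^+ \times P \to G^{\mathbb{C}}$, $(m,p) \mapsto mp$, is a biholomorphism onto an open subset; this needs the fact that $P$ is a closed subgroup (so that $G^{\mathbb{C}}/P$ is a complex manifold) and that $M^+ \cap P = \{e\}$, which follows at the Lie-algebra level from $\mathfrak{m}^+ \cap \mathfrak{p} = 0$.

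Next I would treat the compact side. The inclusion $G^c \hookrightarrow G^{\mathbb{C}}$ descends to a map $G^c/K \to G^{\mathbb{C}}/P$ because $K \subset P$ (as $\mathfrak{k} \subset \mathfrak{k}^{\mathbb{C}} \subset \mathfrak{p}$, and $G^c \cap P = K$ since $K$ is the full isotropy). This map is holomorphic for the natural complex structures — on $G^c/K$ the complex structure is $J$ coming from $\mathfrak{m}^+$, and the differential at the base point identifies $\mathfrak{m}_c$ with $\mathfrak{m}^+ \cong \mathfrak{g}^{\mathbb{C}}/\mathfrak{p}$, so the map is complex-linear at the origin, hence everywhere by $G^c$-equivariance. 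Since $G^c/K$ is compact, the image is closed; since it is a holomorphic map between equidimensional connected complex manifolds with the differential an isomorphism at one point (hence, by homogeneity, everywhere), the image is also open. As $G^{\mathbb{C}}/P$ is connected, the map is surjective, and being injective (an element $g \in G^c$ with $gP = eP$ lies in $G^c \cap P = K$) it is the desired biholomorphism $M^c \cong G^{\mathbb{C}}/P$.

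Finally, for the noncompact side I would consider the composite $G^n/K \to G^{\mathbb{C}}/P \cong G^c/K$, again well defined since $G^n \cap P = K$. The point is that this map lands in the open "big cell" $M^+ P / P$: one shows that for $g \in G^n$ the decomposition $g = m^+ \cdot p$ with $m^+ \in M^+$, $p \in P$ exists — this is the nontrivial analytic input, typically obtained from a Harish-Chandra-type argument showing $G^n \subset M^+ K^{\mathbb{C}} M^-$ inside $G^{\mathbb{C}}$. Granting that, the map $M^n \to M^c$ factors through the chart $M^+ \cong \mathfrak{m}^+$, and on differentials it is again an isomorphism at the base point (now $\mathfrak{m}_n \cong \mathfrak{m}^+$ over $\mathbb{R}$), hence an open embedding by equivariance and the inverse function theorem. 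I expect the main obstacle to be precisely this last step: verifying that the image of $G^n$ in $G^{\mathbb{C}}/P$ stays within the big cell, equivalently bounding the "$M^+$-part" of elements of $G^n$ — this is where positivity of the metric on $\mathfrak{m}_n$ (the noncompactness) genuinely enters, in contrast to the compact case where surjectivity was automatic. Everything else is formal once Lemma~3.1 and the closedness of $P$ are in hand.
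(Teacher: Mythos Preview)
The paper does not actually prove this theorem: it is stated as background, with the reader referred to \cite{Mok} for the proof (just as Lemma~3.1 is quoted from the same source). So there is no ``paper's own proof'' to compare against; what you have written is a reasonable sketch of the standard argument found in Mok or Helgason.

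As a sketch it is essentially correct, with two small soft spots. First, the passage ``$M^+ \cap P = \{e\}$, which follows at the Lie-algebra level from $\mathfrak{m}^+ \cap \mathfrak{p} = 0$'' is not a valid inference in general --- triviality of the intersection of Lie algebras does not force triviality of the intersection of the corresponding subgroups. One usually argues instead that $\exp\colon \mathfrak{m}^+ \to G^{\mathbb{C}}/P$ is injective directly (e.g.\ via a faithful representation, or by showing the big cell is Zariski open and $\exp$ is polynomial). Second, you assert $G^n \cap P = K$ without justification; this requires knowing that $K$ is a maximal compact subgroup of both $G^n$ and of $P$ (or an explicit Iwasawa-type argument), and is not entirely formal. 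You are right that the genuine analytic content is the inclusion $G^n \subset M^+ K^{\mathbb{C}} M^-$, which is exactly the Harish-Chandra decomposition; your identification of this as the crux is accurate, and in Mok's treatment it is proved via the polydisk theorem and an explicit $\mathrm{SL}(2,\mathbb{C})$ calculation on each factor.
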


We will use \textbf{\textit{b}} to denote the Borel embedding in the Hermitian
symmetric space case. Since we have
 $M^c\cong G^{\mathbb{C}} /P$,
we can define an embedding $\textbf{\textit{g}}: G^n/K \hookrightarrow G^{\mathbb{C}}/P \cong  G^c/K$ by $AK \mapsto AP \mapsto ABK$, where
$B \in P$ such that $AB\in G^c$. It is obviously that \textbf{\textit{b}}=\textbf{\textit{g}} and
\begin{lem}
The embedding \textbf{\textit{b}}=\textbf{\textit{g}} is $K$-equivariant.
\end{lem}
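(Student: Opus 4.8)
The plan is to realize $\mathbf{b}=\mathbf{g}$ as a composite of two maps, each visibly equivariant for the left $K$-action, so that $K$-equivariance of the composite becomes automatic. First I would fix notation for the three actions in play: $K$ acts on $M^n=G^n/K$, on $G^{\mathbb{C}}/P$, and on $M^c=G^c/K$ all by left translation, via the inclusions $K\subset G^n\subset G^{\mathbb{C}}$, $K\subset P\subset G^{\mathbb{C}}$, and $K\subset G^c\subset G^{\mathbb{C}}$. The inclusion $K\subset P$ holds because $\mathfrak{k}\subset\mathfrak{k}^{\mathbb{C}}\subset\mathfrak{p}$, and it is exactly what makes the base-point map $AK\mapsto AP$ well defined on cosets.

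Next I would check the two factors separately. The map $j\colon G^n/K\to G^{\mathbb{C}}/P$, $AK\mapsto AP$, is $K$-equivariant essentially by definition: for $k\in K$ one has $j(k\cdot AK)=(kA)P=k\cdot(AP)=k\cdot j(AK)$, since left multiplication commutes with passage to right $P$-cosets. For the second factor, let $i\colon G^{\mathbb{C}}/P\to G^c/K$ be the inverse of the Borel biholomorphism of the theorem above, so that $i(AP)=ABK$, where $B\in P$ is any element with $AB\in G^c$. Such a $B$ exists because the Borel map is onto $G^{\mathbb{C}}/P$, and it is unique up to right multiplication by $K$ because $G^c\cap P=K$; hence $i$ is well defined. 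Then for $k\in K$ the \emph{same} $B$ serves the translated coset, since $k\in K\subset G^c$ forces $(kA)B=k(AB)\in G^c$, whence $i(k\cdot AP)=(kA)BK=k\cdot(ABK)=k\cdot i(AP)$. Composing the two steps, $\mathbf{b}=\mathbf{g}=i\circ j$ is $K$-equivariant.

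I do not expect a genuine obstacle here: the argument is formal once the Borel embedding theorem is granted, and the only points deserving a moment's care are the well-definedness of $i$ (which relies on $K\subset P$ and $G^c\cap P=K$, both part of the statement $M^c\cong G^{\mathbb{C}}/P$) and the observation that the auxiliary element $B$ need not be changed when the base point is moved by an element of $K$. If one prefers to dispense with the coset bookkeeping entirely, an equivalent route is to note that under the Borel identification $M^c\cong G^{\mathbb{C}}/P$ the subspace $M^n$ is the open $G^n$-orbit of $eP$, that the identifications $M^n\cong G^n\cdot eP$ and $M^c\cong G^{\mathbb{C}}/P$ are respectively $G^n$- and $G^c$-equivariant, and that both these actions restrict on the common subgroup $K$ to the action of $K\subset G^{\mathbb{C}}$ on $G^{\mathbb{C}}/P$; equivariance of $\mathbf{b}$ under $K$ then follows immediately.
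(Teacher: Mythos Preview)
Your proof is correct and follows essentially the same approach as the paper: the key observation in both is that if $B\in P$ satisfies $AB\in G^c$, then the \emph{same} $B$ works for $kA$ since $k\in K\subset G^c$ forces $kAB\in G^c$. The paper states this in one line by writing $B'=B$; you make the same point but frame it via the factorization $\mathbf{g}=i\circ j$ and add helpful remarks on well-definedness (the roles of $K\subset P$ and $G^c\cap P=K$).
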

\begin{proof}
For any $AK \in G^n/K$, $\textbf{\textit{g}}(AK)=ABK$,
where $B\in P$ such that $AB\in G^c$. Now for any $k\in K$,
$\textbf{\textit{g}}(k\cdot AK)=kAB^{'}K$, where $B^{'}\in P$ such that
$kAB^{'}\in G^c$. We can just choose $B^{'}=B$. So $\textbf{\textit{g}}(k\cdot AK)=kABK=k\cdot \textbf{\textit{g}}(AK)$.
\end{proof}

\begin{eg}
For the Riemann sphere $S^2=\text{SU}(2)/\text{S}(\text{U}(1)^2)$, its noncompact dual is
the unit disk $\text{SU}(1,1)/\text{S}(\text{U}(1)^2)$ in the complex line $\mathbb{C}$.
The Borel embedding is defined through the manifold
$\text{SL}(2,\mathbbm{C})/P$ which is isomorphic to $
\text{SU}(2)/\text{S}(\text{U}(1)^2)$, where $P$ is a parabolic
subgroup of $\text{SL}(2,\mathbbm{C})$.
Since we can embed the complex line into the sphere by stereographic projection,
the unit disk is the lower hemisphere of the sphere, this is just the Borel embedding. In this case,
the image of the Borel embedding consists of those points with distance less than half of the diameter (least uppon
bound of the length of an arbitrary minimal geodesic) of the sphere
from the south pole.
\end{eg}

For Hermitian symmetric spaces, We also have polydisc theorem and Harish-Chandra embedding theorem which we will not discuss
in this paper.

\bigskip

\subsection{The generalized embedding for symmetric R-spaces}
From the above two subsections, we have space-like
embedding \textbf{\textit{p}} for the generalized Grassmannians and Borel embedding \textbf{\textit{b}} for the Hermitian symmetric spaces.
Though these two embeddings seem very different from
their definitions, their formulas are the same (the same \textbf{\textit{g}}) when we write them from the Lie group theoretical aspect.
The definition of \textbf{\textit{g}} depends  on the fact that the compact symmetric space $G/K$
admits a connected transformation group larger than the isometry group $G$, which is just the definition for symmetric R-spaces.

Symmetric R-spaces, introduced by Takeuchi and Nagano in the 1960s, form a
distinguished subclass of compact Riemannian symmetric spaces. A symmetric R-space is a compact symmetric space
$G/K$ which admits a connected transformation group larger than the isometry group $G$.
Let $L$ be the largest such group, then $L$ is semisimple and $G/K \cong L/P$ where
$P$ is a parabolic subgroup of $L$ and $G \bigcap P=K$. Irreducible symmetric R-spaces
have been first classified by Kobayashi and Nagano in \cite{KN}\cite{Na}:

(i) all Hermitian symmetric spaces of compact type;

(ii) Grassmannian manifolds $\text{O}(n+m)/
\text{O}(n)\text{O}(m)$, $\text{Sp}(n+m)/
\text{Sp}(n)\text{Sp}(m)$;

(iii) the classical groups $\text{SO}(m)$, $\text{U}(m)$, $\text{Sp}(m)$;

(iv) $\text{U}(2m)/
\text{Sp}(m)$, $\text{U}(m)/ \text{O}(m)$;

(v) $\text{SO}(n+1)\times \text{SO}(m+1)/
S(\text{O}(n) \times \text{O}(m))$;

(vi) the Cayley projective plane and three exceptional spaces.

From the above list, we can see that all the compact generalized Grassmannians and compact Hermitian symmetric spaces are symmetric R-spaces.

\begin{rmk}
Note that there are three types irreducible classical symmetric R-spaces which are not generalized
Grassmannian: $SO(n+2)/SO(n)SO(2)$, $SO(n+1)\times SO(m+1)/S(O(n)\times O(m))$, and $SO(2n)/U(n)$ (n odd). However, we can also describe
them as some kind of ``Grassmannian''.

For $SO(n+2)/SO(n)SO(2)$, it is a Hermitian symmetric space, consisting of \textbf{oriented} subspaces $L \cong \mathbb{R}^2$ in $\mathbb{R}^{n+2}$.

For $SO(n+1)\times SO(m+1)/S(O(n)\times O(m))$, it is a real form of $SO(n+m+2)/SO(n+m)SO(2)$, hence consisting of \textbf{oriented} subspaces
$L \cong \mathbb{R}^2$ in $\mathbb{R}^{n+m+2}$ which are invariant under the corresponding isometric involution.

For $SO(2n)/U(n)$ (n odd), it is a reflective submanifold of $U(2n)/U(n)U(n)$, hence consisting of subspace
$L \cong \mathbb{C}^n$ in $\mathbb{C}^{2n}$ which are invariant under the corresponding isometric involution.
\end{rmk}

\begin{rmk}
From Remark 3.2, we can also describe these three types classical symmetric R-spaces as some kind of ``Grassmannians'', the natural question is whether their noncompact duals
are corresponding ``space-like Grassmannians''? However, this is not true in general.

For  $SO(n+2)/SO(n)SO(2)$, consisting of \textbf{oriented} subspaces $L \cong \mathbb{R}^2$ in $\mathbb{R}^{n+2}$, its noncompact dual is $SO_{o}(2,n)/SO(n)SO(2)$, consisting of space-like $L \cong \mathbb{R}^2$ in $\mathbb{R}^{n+2}$ without the oriented condition.

For $SO(n+1)\times SO(m+1)/S(O(n)\times O(m))$, same with the $SO(n+2)/SO(n)SO(2)$ case, since it is a real form of $SO(n+m+2)/SO(n+m)SO(2)$.

For $SO(2n)/U(n)$ (n odd), it is a reflective submanifold of $U(2n)/U(n)U(n)$, hence its noncompact dual is the corresponding space-like Grassmannian.
That means the space-like embedding  \textbf{\textit{p}} can be generalized to $SO(2n)/U(n)$ (n odd).
\end{rmk}

From this definition or characterization for a symmetric R-space, we can easily construct an open embedding from its noncompact dual to it.
Let $M^c=G^c/K \cong L/P$ be a \textbf{symmetric R-space}, and $M^n=G^n/K$ its
noncompact dual, then $G^n$ is a subgroup of $L$ and $G^n \bigcap P=K$. We define
\[
\textbf{\textit{g}}: G^n/K \hookrightarrow L/P \cong  G^c/K
\]
\[
AK\mapsto AP\mapsto ABK
\]
where $B \in P$ such that $AB \in G^c$ and such $B$ is unique up to $K$. That is, this map generalizes
space-like embedding \textbf{\textit{p}} for the generalized Grassmannians and
Borel embedding \textbf{\textit{b}} for the Hermitian symmetric spaces. Obviously, we have
\begin{lem}
The embedding \textbf{\textit{g}} is $K$-equivariant.
\end{lem}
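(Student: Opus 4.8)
The plan is to imitate the proof of Lemma 3.2 (the Hermitian case) and of Fact 2 of Example 3.1, the only structural input being that $K$ is contained in the parabolic subgroup $P$; this is immediate from the defining relation $G^c\cap P=K$. First I would fix the conventions: the relevant $K$-actions are by left translation, $k\cdot(AK)=(kA)K$ on $G^n/K$ and $k\cdot(AP)=(kA)P$ on $L/P$, and these correspond under the isomorphism $L/P\cong G^c/K$. Since $K\subset G^n$, $K\subset G^c$, $K\subset L$ and, crucially, $K\subset P$, all of these left translations are legitimate operations on the respective coset spaces.

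Next I would carry out the computation. Fix $AK\in G^n/K$ and $k\in K$, and write $\textbf{\textit{g}}(AK)=ABK$ with $B\in P$ such that $AB\in G^c$. Since $kA\in G^n$, the value $\textbf{\textit{g}}(k\cdot AK)=\textbf{\textit{g}}((kA)K)$ is computed by choosing some $B'\in P$ with $(kA)B'\in G^c$ and declaring the value to be $kAB'K$. The key observation is that one may take $B'=B$: indeed $B\in P$, and $kAB=k(AB)\in G^c$ because $k\in K\subset G^c$ and $AB\in G^c$. Therefore $\textbf{\textit{g}}(k\cdot AK)=kABK=k\cdot(ABK)=k\cdot\textbf{\textit{g}}(AK)$, which is the desired $K$-equivariance.

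For completeness I would also record why $\textbf{\textit{g}}$ is well defined, since the argument uses a choice of $B$: two admissible choices differ by right multiplication by an element of $K$, hence give the same coset $ABK$; and changing the representative $A$ to $Ak_0$ with $k_0\in K$ lets $B$ be replaced by $k_0^{-1}B\in P$ (here again $K\subset P$ is used), which leaves $ABK$ unchanged. The only real obstacle is this single inclusion $K\subset P$; once it is available, the $K$-equivariance is pure coset bookkeeping, formally identical to Lemma 3.2 and Fact 2 of Example 3.1. Since \textbf{\textit{p}} and \textbf{\textit{b}} are special cases of \textbf{\textit{g}}, this simultaneously yields a uniform proof of their $K$-equivariance.
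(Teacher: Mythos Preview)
Your proof is correct and is essentially the same as the paper's approach: the paper states this lemma as obvious, implicitly deferring to the identical computation carried out for Lemma 3.2 (choose $B'=B$), which is exactly what you reproduce. Your added remarks on well-definedness and the inclusion $K\subset P$ are accurate refinements but do not change the argument.
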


Since the duality
between compact symmetric space and noncompact symmetric space is
invariant under reflective submanifold, we have the following proposition.

\begin{prop}
For any symmetric R-space $M$, if $N$ is a reflective submanifold of $M$
which is also a symmetric R-space, then $\textbf{\textit{g}}_N=\textbf{\textit{g}}_M|_N$.
\end{prop}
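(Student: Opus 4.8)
The plan is to unwind the definition of $\textbf{\textit{g}}$ on both $M$ and $N$ and show they agree on $N$. Let $M^c = G^c/K \cong L/P$ be the symmetric R-space, with noncompact dual $M^n = G^n/K$, and let $N^c \subset M^c$ be a reflective submanifold that is itself a symmetric R-space, say $N^c = H^c/K_H \cong L_H/P_H$ with noncompact dual $N^n = H^n/K_H$. Being reflective, $N^c$ is a connected component of the fixed point set of an involutive isometry $\sigma$ of $M^c$; the key structural input (already used in the generalized Grassmannian discussion and attributed to \cite{L1,L2,L3,L4} and to the invariance of duality under reflective submanifolds) is that $\sigma$ can be taken to commute with the relevant group actions, so that $H^c = (G^c)^\sigma_0$, $K_H = K^\sigma_0 = H^c \cap K$, and — crucially — $\sigma$ extends to an involution of the larger group $L$ with $L_H = L^\sigma_0$ and $P_H = L_H \cap P$. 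Likewise on the noncompact side $H^n = (G^n)^\sigma_0$ and $K_H = H^n \cap K$. I would begin by assembling these compatibilities as the setup, citing the reflective-submanifold machinery rather than reproving it.

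Next I would check that the inclusions are compatible in the strong sense needed: the embedding $L_H/P_H \hookrightarrow L/P$ identifies $N^c$ with its image inside $M^c$ (this is just the statement that $N^c \subset M^c$ as reflective submanifold, compatible with the R-space presentations), and similarly $H^n/K_H \hookrightarrow G^n/K$ identifies $N^n$ as a submanifold of $M^n$. Then, given a point $AK_H \in N^n$ with $A \in H^n \subset G^n$, the map $\textbf{\textit{g}}_M$ sends it to $ABK$ where $B \in P$ is chosen so that $AB \in G^c$, unique up to $K$; the map $\textbf{\textit{g}}_N$ sends it to $AB'K_H$ where $B' \in P_H$ with $AB' \in H^c$, unique up to $K_H$. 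The heart of the argument is to show one may take $B' = B$: since $A \in H^n$ and $H^n$ is $\sigma$-fixed, applying $\sigma$ to the decomposition $A = (AB)B^{-1}$ with $AB \in G^c$, $B^{-1} \in P$ and using uniqueness up to $K$ of the Iwasawa-type/parabolic factorization $G^n \subset G^c \cdot P$ forces $\sigma(B) \in BK$, hence (after adjusting $B$ within its $K$-coset, which does not change $ABK$) we get $B \in P^\sigma = P_H$ and $AB \in (G^c)^\sigma \cap (\text{right component}) = H^c$. Therefore $ABK \in H^c/K$ meets $N^c$, and under the identification $N^c \hookrightarrow M^c$ it is exactly $AB K_H = \textbf{\textit{g}}_N(AK_H)$.

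I would then note the cleaner alternative route, which is probably the one to present: realize everything inside real Grassmannians. By Remark 3.1 and the reflective-submanifold description, both $M$ and $N$ sit inside real Grassmannians $Gr(k,\mathbb{R}^N) \supset Gr(k',\mathbb{R}^{N'})$ as (intersections of) fixed-point sets of isometric involutions, and in Example 3.1 (Fact 3) it was shown that for the real Grassmannian itself $\textbf{\textit{g}} = \textbf{\textit{p}}$, the space-like embedding, which is manifestly given by "the same subspace." Since $\textbf{\textit{p}}$ for $N$ is by construction the restriction of $\textbf{\textit{p}}$ for the ambient Grassmannian, and the same holds for $M$, the relation $\textbf{\textit{g}}_N = \textbf{\textit{g}}_M|_N$ reduces to the tautology that restricting "take the same space-like subspace" to a subfamily of subspaces gives "take the same space-like subspace." For the R-spaces not of Grassmannian type one instead uses that they are reflective submanifolds (or real forms) of Hermitian symmetric spaces, where $\textbf{\textit{g}} = \textbf{\textit{b}}$ is the Borel embedding, and Borel embeddings are compatible with the holomorphic embeddings $G^{\mathbb{C}}_H/P_H \hookrightarrow G^{\mathbb{C}}/P$ coming from reflective subspaces.

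The main obstacle is the factorization-uniqueness step: one must know that for the symmetric R-space presentation, the "Gram–Schmidt"/parabolic factorization $A = A_0 B$ with $A_0 \in G^c$, $B^{-1} \in P$ is not merely existent (which follows from $L = G^c P$) but unique up to the stated $K$-ambiguity, and moreover that this factorization is functorial under the involution $\sigma$ defining the reflective submanifold. In the generalized Grassmannian and Hermitian cases this is explicit (Gram–Schmidt, resp. the Harish–Chandra-type decomposition), but phrasing it uniformly for all symmetric R-spaces requires invoking the structure theory of \cite{KN}\cite{Na} together with the $\sigma$-stability of a common Iwasawa decomposition; I would isolate this as a lemma and prove it by choosing a maximal abelian $\mathfrak{a} \subset \mathfrak{m}_n$ that is $\sigma$-stable (possible since $\sigma$ preserves the Cartan decomposition) and tracking the resulting $\sigma$-stable Iwasawa/parabolic data through the presentation $G^c/K \cong L/P$.
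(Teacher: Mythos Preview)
The paper does not actually prove this proposition: it is stated immediately after the single sentence ``Since the duality between compact symmetric space and noncompact symmetric space is invariant under reflective submanifold, we have the following proposition,'' and no further argument is given. So there is little to compare against on the paper's side; the detailed work is deferred to the analogous statement for $\textbf{\textit{f}}$ (Proposition~4.3).

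Your plan goes well beyond what the paper offers, and your ``cleaner alternative route'' --- reducing to real Grassmannians or to Hermitian symmetric spaces, where $\textbf{\textit{g}}$ coincides with the manifestly restriction-compatible maps $\textbf{\textit{p}}$ resp.\ $\textbf{\textit{b}}$ --- is exactly the spirit in which the paper seems to regard the proposition as obvious, and is the most efficient way to make the statement rigorous. Your first, intrinsic approach via $\sigma$-equivariance of the parabolic factorization is also sound in outline, and you correctly isolate the genuine content: that the involution $\sigma$ defining $N$ extends to an automorphism of the larger group $L$, with the R-space presentation of $N$ sitting inside that of $M$ as $L_H \subset L^{\sigma}$, $P_H = L_H \cap P$. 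One caution: it is not automatic that the \emph{intrinsic} largest transformation group $L_H$ of $N$ (from Nagano's definition) coincides with the $\sigma$-fixed subgroup of $L$ --- in principle $N$ could admit automorphisms not inherited from $L$. For the proposition you do not actually need the maximal $L_H$; it suffices to have \emph{some} presentation $N^c \cong L'/P'$ with $L' \subset L$, $P' = L' \cap P$, and $H^n, H^c \subset L'$, which is what your $\sigma$-extension argument delivers. Stating it that way avoids an unnecessary maximality claim.
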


Since this embedding \textbf{\textit{g}} generalizes the embeddings \textbf{\textit{p}} and \textbf{\textit{b}},
we will call it the generalized embedding in the rest of the paper.

As a conclusion of this section, the three embeddings
\textbf{\textit{p}}, \textbf{\textit{b}}  and \textbf{\textit{g}} are compatible with each other, they are all $K$-equivariant
and satisfy Proposition 3.1.

\bigskip

\section{The embeddings from Lie algebra aspect}
In the above section, we have a generalized embedding \textbf{\textit{g}} for symmetric R-spaces which generalizes both \textbf{\textit{p}} and \textbf{\textit{b}}, defined from Lie group theoretical aspect.
How to characterize the image of this embedding?
To answer this question, we will first construct an embedding for symmetric R-spaces from Lie algebra theoretical aspect, so the image is easy to describe, and then
show this new embedding is just our \textbf{\textit{g}}.

In this section, we will first give further characterizations and properties of symmetric R-spaces,  then review the cut loci of symmetric R-spaces, and finally construct an embedding from the Lie algebra
aspect using the cut loci.

\subsection{Unit lattices of symmetric R-spaces}
We will list some
important properties of symmetric R-spaces in this subsection.

\bigskip

In 1984, Takeuchi \cite{Ta} showed that irreducible
symmetric R-spaces are either irreducible Hermitian symmetric
spaces of compact type or compact connected real forms of them and vice-versa.

\bigskip

In 1985, O. Loos \cite{Lo} gave another intrinsic characterization of symmetric R-spaces among
all compact symmetric spaces by the property that its unit lattice is \textbf{orthonormal} (i.e. the unit lattice possesses an orthonormal
$\mathbb{Z}$-basis). Recall that for a compact symmetric space $G/K$ with the corresponding Cartan decomposition $\mathfrak{g}=\mathfrak{k}\bigoplus\mathfrak{m}$,
choose a maximal abelian subspace  $\mathfrak{a} \subset \mathfrak{m} $ in $\mathfrak{m}$ (called Cartan subalgebra of $(G,K)$), then the unit lattice is:
\[
\Gamma(G,K):=\{A \in \mathfrak{a}: \exp A\in
K\}.
\]
The unit lattice $\Gamma(G,K)$ of $G/K$ is said to be \textbf{orthonormal} if
there exits a basis $\{A_1, \cdots, A_r\}$ of $\mathfrak{a}$ (where $r=\mathrm{rank}(M):=\dim(\mathfrak{a})$)
with the properties

(i) $A_i \bot A_j$ if $i\neq j$

(ii) $|A_i|=|A_j|$

(iii) $\Gamma(G,K)=span_{\mathbb{Z}}(A_1, \cdots, A_r)=\{\sum_{i=1}^{i=r} m_iA_i; m_i \in \mathbb{Z}\}$

We will compute some examples of unit lattices of compact symmetric spaces below.

\begin{eg}The Real Grassmannian $M=\text{O}(n+m)/
\text{O}(n)\text{O}(m)$ $(m\geqslant n)$,  then
$M\cong\{L\subset\mathbb{R}^{n+m}: L\cong \mathbb{R}^n\}$ and $\mathrm{rank}(M)=n$.

For the Cartan decomposition
$\mathfrak{g}=\mathfrak{k}+\mathfrak{m}$, we have

$\mathfrak{g}=\underline{o}(n+m)=\left \{\left(
\begin{array}
[c]{cc}%
A & B\\
-B^{t} & D
\end{array}
\right):A^{t}+A=0, D^{t}+D=0 \right \}$,

$\mathfrak{k}=\underline{o}(n)\times\underline{o}(m)=\left \{\left(
\begin{array}{cc}
A & 0 \\
0 & D
\end{array}
\right):A^{t}+A=0, D^{t}+D=0\right \}$,

$\mathfrak{m}=\left \{\left(
\begin{array}
[c]{cc}%
0 & B\\
-B^{t} &0
\end{array}
\right)\right \}$.

Denote $R_{ij}$ $(1\leq i\leq n, n+1\leq j \leq n+m)$ to
be the matrix with 1 at $(i,j)$-entry, -1 at $(j,i)$-entry, and 0
otherwise. Then $\{R_{ij}'s\}$ form an orthonormal basis of
$\mathfrak{m}$ with respect to the inner product $\langle
\cdot,\cdot \rangle$, where $\langle X,Y
\rangle:=-\frac{1}{2}tr(XY)$. By computing the Lie bracket, we know
that $\mathfrak{a}:=span_{\mathbb{R}}(R_{1,n+1},R_{2,n+2},\cdots ,R_{n,2n})$ is a
Cartan subalgebra of $(G,K)$, then $\Gamma(G,K)=\{A \in
\mathfrak{a}|\exp A \in \text{O}(n)\times
\text{O}(m)\}=\{\sum_{i=1}^{n}m_i\pi R_{i,n+i}, m_i \in
\mathbb{Z}\}$. The unit lattice $\Gamma(G,K)$ is generated by
$\{A_1=\pi R_{1,n+1}, \cdots, A_n=\pi R_{n,2n}\}$ with $A_i \bot
A_j$ and $|A_i|=|A_j|$, $\forall i\neq j \in \{1,2, \cdots, n\}$,
hence orthonormal.
\end{eg}

Similar to the above example, we can check that the unit lattices of all the classical
irreducible symmetric R-spaces are orthonormal.

\begin{eg}
For the special unitary group $\text{SU}(3)$ (not a symmetric R-space), its unit lattice is just its integral lattice,
the picture of this lattice can be found in page 227 of \cite{BD}. Obviously, this lattice is
not orthonomal.
\end{eg}

For the exceptional symmetric spaces, because the explicit
description of the corresponding Lie algebras as matrix algebras is
too unwidely to be useful generally, we can't determine whether
these unit lattices are orthonormal or not by direct computations as above.
However for the five
exceptional compact Lie groups, we can use the relationship between
integral lattices and inverse roots to get the answer
(page 223 of \cite{BD}). That is, for the five simply-connected exceptional compact Lie groups,
the integral lattice is the same with
the abelian group generated by inverse roots, hence
they are not orthonormal (the list of
the root system of exceptional Lie algebras can be found in page 686-692
of \cite{K}).

\bigskip

In 2012, P. Quast and M.S. Tanaka \cite{QT} gave the following important property of symmetric R-spaces: reflective submanifolds of symmetric R-spaces
are (geodesically) convex. That means any shortest geodesic arc in the reflective submanifolds is still shortest in the symmetric R-spaces.

Further characterizations and properties of symmetric R-spaces can be found in \cite{Ta2}.
\bigskip

\subsection{Cut loci of symmetric R-spaces}
This subsection is mainly based on T.Sakai's work \cite{TS1}\cite{TS2}\cite{TS3}.
More about cut loci, Cartan polyhedrons and injective radii for symmetric spaces
can be found in \cite{Y1}\cite{Y2}. Let $(M,g)$ be a
complete Riemannian manifold with a fixed point $x$ in $M$, then we
have an exponential map $\exp_{x}:T_{x}M\rightarrow M$. For any
$X\in T_{x}M, |X|=1, \gamma_{X}(t):=\exp_{x}(tX)$ is a geodesic
parametrized by arclength.

\begin{defn}
$t_{0}X$ (resp. $\exp_{x}(t_{0}X)$) is called a tangent conjugate
point (resp. conjugate point) of $x$ along a geodesic $\gamma_X$ if
there exists a nonzero Jacobi field $J(t)$ along $\gamma_X$ such
that $J(0)=J(t_0)=0$.

$\bar{t}_{0}X$ (resp. $\exp_{x}(\bar{t}_{0}X)$) is called a \textbf{tangent cut point}
$($resp. \textbf{cut point}$)$ of $x$ along $\gamma_X$ if
$\gamma_{X|[0,\bar{t}_{0}]}$ is minimal but $\gamma_{X|[0,s]}$ is not
minimal for any $s>\bar{t}_{0}$.
\end{defn}

Denote $\mathrm{Cut}(x)$ to be the set of all cut points of $x$
along any geodesic, it is called the \textbf{cut loci} of $x$.
The following standard result about tangent cut points is
Proposition 2.1 in \cite{TS1}.

\begin{prop}
If $\bar{t}_0X$ is a tangent cut point of $x$ along $\gamma_X$, then
either $(a)$ $\bar{t}_0X$ is the first tangent conjugate point of
$x$ along $\gamma_X$,
or $(b)$ there exists a geodesic $\gamma_Y \neq \gamma_X$ joining
$x$ to $\gamma_X(\bar{t}_0)$ such that
$l(\gamma_Y|[0,\bar{t}_0])=l(\gamma_X|[0,\bar{t}_0])$. Here $l$ is the
length of the geodesic.

Conversely, if $(a)$ or $(b)$ is satisfied, then there exists
$\bar{t} \in (0,\bar{t}_0]$, such that $\bar{t}X$ is a tangent cut
point of $x$ along $\gamma_X$.
\end{prop}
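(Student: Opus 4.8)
The plan is to establish the two implications separately, in both cases via the standard limiting-geodesic argument together with continuity of the Riemannian distance function and compactness of the unit sphere in $T_xM$. For the forward direction, assume $\bar t_0 X$ is a tangent cut point, so that $\gamma_X|_{[0,\bar t_0]}$ is minimal (hence $d(x,\gamma_X(\bar t_0))=\bar t_0$) while for some sequence $s_n\downarrow\bar t_0$ the arc $\gamma_X|_{[0,s_n]}$ is not minimal. I would then choose unit-speed minimizing geodesics $\sigma_n$ from $x$ to $\gamma_X(s_n)$, write $\sigma_n(t)=\exp_x(tY_n)$ with $|Y_n|=1$ and $L_n:=d(x,\gamma_X(s_n))<s_n$, pass to a subsequence with $Y_n\to Y$, $|Y|=1$, and observe that $L_n\to d(x,\gamma_X(\bar t_0))=\bar t_0$ by continuity of $d$. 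Passing to the limit gives $\exp_x(\bar t_0 Y)=\gamma_X(\bar t_0)$, i.e.\ $\gamma_Y|_{[0,\bar t_0]}$ is a minimizing geodesic from $x$ to $\gamma_X(\bar t_0)$.

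Now I would split into cases. If $Y\neq X$ then $\gamma_Y\neq\gamma_X$ and alternative $(b)$ holds. If $Y=X$, I claim $\bar t_0 X$ is a conjugate point: otherwise $\exp_x$ restricts to a diffeomorphism on a neighborhood $U$ of $\bar t_0 X$ in $T_xM$; for $n$ large both $L_nY_n$ and $s_nX$ lie in $U$ and both are mapped by $\exp_x$ to $\gamma_X(s_n)$, yet $|L_nY_n|=L_n<s_n=|s_nX|$, contradicting injectivity of $\exp_x$ on $U$. Hence $\bar t_0 X$ is a tangent conjugate point; since $\gamma_X|_{[0,\bar t_0]}$ is minimal and a geodesic cannot minimize once it contains an interior conjugate point, there is no conjugate point in $(0,\bar t_0)$, so $\bar t_0 X$ is the first one and alternative $(a)$ holds.

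For the converse I would put $\bar t:=\sup\{\,t>0:\gamma_X|_{[0,t]}\text{ is minimal}\,\}$. This supremum is positive because $\exp_x$ is a local diffeomorphism near the origin, and the set over which it is taken is closed (if $\gamma_X|_{[0,t_n]}$ is minimal and $t_n\to t$ then $d(x,\gamma_X(t))=\lim t_n=t$), so $\bar t X$ is automatically a tangent cut point. It then remains only to check $\bar t\le\bar t_0$. Under $(a)$, for every $s>\bar t_0$ the arc $\gamma_X|_{[0,s]}$ has the interior conjugate point $\bar t_0$, hence is not minimal, so $\bar t\le\bar t_0$. Under $(b)$, for $s>\bar t_0$ the concatenation of $\gamma_Y|_{[0,\bar t_0]}$ with $\gamma_X|_{[\bar t_0,s]}$ is a broken curve of length $s$ from $x$ to $\gamma_X(s)$ with a genuine corner at $\gamma_X(\bar t_0)$ (the two geodesics have distinct velocities there, for otherwise they would coincide); a broken curve is never minimal, so $d(x,\gamma_X(s))<s$ and again $\bar t\le\bar t_0$.

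I expect the only delicate point to be the conjugate-point case of the forward direction: the clean deduction, from local injectivity of $\exp_x$ at a non-conjugate point, that $L_nY_n$ and $s_nX$ must coincide, and the supporting classical lemma that a geodesic strictly fails to minimize once it has passed an interior conjugate point, which I would either invoke from the literature or prove by a short second-variation argument using a broken Jacobi field. Everything else reduces to compactness of the unit sphere in $T_xM$ and continuity of the distance function.
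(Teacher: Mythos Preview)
Your argument is correct and is precisely the classical limiting-geodesic proof of this fact. Note, however, that the paper does not give its own proof of this proposition at all: it merely records it as ``Proposition~2.1 in~\cite{TS1}'' and moves on, so there is nothing in the paper to compare against beyond the cited reference, whose proof your proposal reproduces.
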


\begin{eg}
The three space forms, i.e. simply connected Riemannian manifold with constant
sectional curvature $K$,

$K=+1$, unit sphere $S^n$ has $\mathrm{Cut}(x)=\{-x\}$.

$K=0$, Euclidean space $\mathbb{R}^n$ has
$\mathrm{Cut}(x)=\emptyset$.

$K=-1$, hyperbolic space $\mathbb{H}^n$ has
$\mathrm{Cut}(x)=\emptyset$.
\end{eg}

\begin{eg}
Real projective space $\mathbb{RP}^n$ has
$\mathrm{Cut}(x)\cong \mathbb{RP}^{n-1}$.

Complex projective space $\mathbb{CP}^n$ with Fubini-study metric has
$\mathrm{Cut}(x)\cong \mathbb{CP}^{n-1}$.
\end{eg}

\begin{eg}
Flat torus $T^2=S^1\times S^1$, $\mathrm{Cut}(x)\cong S^1\times\{0\}\cup\{0\}\times
S^1$.
\end{eg}

\begin{rmk}
All the examples above are homogeneous spaces, hence the cut loci
of two different points look the same.
\end{rmk}

Let $M$ be a symmetric space.
If $M$ is
noncompact, then the sectional curvature of $M$ is non-positive, by
Hadamard-Cartan Theorem, the exponential map is a covering map. For
any $x,y \in M$, there exists a unique minimizing geodesic joining
$x$ and $y$. So $\mathrm{Cut}(x)=\emptyset$ for any $x \in M$. If
$M$ is compact, its diameter is finite, hence there exists a cut
point for any point $x\in M$ along any geodesic starting from $x$.
So $\mathrm{Cut}(x)\neq\emptyset$ for any $ x \in M$.

Now given a compact symmetric space $M$ and a fixed point $o$ in
$M=G/K$, we want to determine the tangent cut loci of $o$. For the corresponding
Lie algebra $\mathfrak{g}$, $\mathfrak{k}$ of $G$, $K$, we have the
Cartan decomposition $\mathfrak{g}=\mathfrak{k}+\mathfrak{m}$, where
$T_oM\cong \mathfrak{m}$. Let $\mathfrak{a}$ be a maximal abelian
subspace of $\mathfrak{m}$,
since $\mathrm{Ad}K(\mathfrak{a})=\mathfrak{m}$ and
$\mathrm{Ad}K$ acts on $\mathfrak{m}$ as isometries, it suffices to
compute $\bar{t}_0(X)$ for $X \in \mathfrak{a}$. From Proposition 3.1, obviously, we have

\begin{prop}
Let $M=G/K$ be a compact symmetric space and $\mathfrak{a}$ be a
Cartan subalgebra of $(G,K)$. For a unit vector $X\in \mathfrak{a}$,
assume that $\bar{t}_0 X$ is a tangent cut point of $o$ along
$\gamma_X$. Then either $\bar{t}_0 X$ is the first tangent conjugate
point of $o$ along $\gamma_X$ or there exists a unit vector $Y \in
\mathfrak{a}$, $Y\neq X$ such that $\exp_o \bar{t}_0 X=\exp_o
\bar{t}_0 Y$ does hold.
\end{prop}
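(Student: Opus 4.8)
The plan is to reduce the statement to the known result about general complete Riemannian manifolds (Proposition 3.1, which is Proposition 2.1 of \cite{TS1}) by exploiting the symmetry of $M=G/K$ and the fact that the isotropy action of $K$ on $\mathfrak{m}$ transports everything into the flat $\mathfrak{a}$. First I would recall that for a unit vector $X\in\mathfrak{a}$ with tangent cut point $\bar t_0 X$, Proposition 3.1 tells us that either $\bar t_0 X$ is the first tangent conjugate point along $\gamma_X$, or there exists a geodesic $\gamma_Z\neq\gamma_X$ with $Z\in\mathfrak{m}$, $|Z|=1$, joining $o$ to $\gamma_X(\bar t_0)$ with equal length. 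The only content beyond Proposition 3.1 is that in case $(b)$ the competing geodesic direction $Z$ can be chosen to lie in the \emph{same} Cartan subalgebra $\mathfrak{a}$ rather than merely in $\mathfrak{m}$.

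The key step is therefore the following normalization. Given the competing unit vector $Z\in\mathfrak{m}$ with $\exp_o\bar t_0 X=\exp_o\bar t_0 Z$, I would consider the point $p:=\exp_o(\bar t_0 X)$ and the maximal torus/flat through $o$ and $p$. Concretely, the geodesic $\gamma_Z$ ends at $p=\exp_o(\bar t_0 X)$; since $X\in\mathfrak a$, both $o$ and $p$ lie in the flat totally geodesic submanifold $\exp_o(\mathfrak a)$. Now $Z$ together with $\mathfrak a$ may not lie in a common abelian subspace, but I can apply an element $k\in K$ fixing $p$ appropriately — more precisely, use the fact that $\mathrm{Ad}(K)\mathfrak a=\mathfrak m$ together with transitivity of the Weyl group action, to move $Z$ into $\mathfrak a$ while keeping the endpoints $o$ and $p$ fixed. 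Here one should use that the stabilizer in $K$ of the geodesic $\gamma_X$ (equivalently of the pair $(o,p)$ along the chosen flat) acts on the set of minimizing geodesics from $o$ to $p$, and that every minimizing geodesic from $o$ to $p$ is $K_p$-conjugate (or at least equidistant-conjugate) to one lying in $\mathfrak a$; this is where the homogeneity and the structure of symmetric spaces — in particular that two points are always joined by a geodesic inside some maximal flat, and all maximal flats through a point are $K$-conjugate — does the real work.

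For the conjugate-point alternative $(a)$, essentially nothing needs to be done: if $\bar t_0 X$ is the first tangent conjugate point along $\gamma_X$ and $X\in\mathfrak a$, that statement is already intrinsic to $\gamma_X$ and survives verbatim. Likewise the converse direction of Proposition 4.4 (that either alternative forces the existence of a tangent cut point $\bar t X$ with $\bar t\le\bar t_0$) is immediate from the converse half of Proposition 3.1, since a vector $Y\in\mathfrak a$ is in particular a vector in $\mathfrak m$, so no extra argument is required.

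The main obstacle I expect is the normalization step in case $(b)$: rigorously producing the element of $K$ that carries the competing direction $Z$ into $\mathfrak a$ without disturbing the two endpoints. The cleanest route is probably to argue that the midpoint, or the endpoint $p$, together with $o$ determines a flat, invoke that $\gamma_Z$ can be taken inside some maximal flat $\mathfrak a'$ through $o$, use $K$-conjugacy of maximal flats to find $k\in K$ with $\mathrm{Ad}(k)\mathfrak a'=\mathfrak a$ and $k\cdot o=o$, and then check that $k$ also fixes $p$ (or replace $Z$ by $\mathrm{Ad}(k)Z$ and observe the endpoint is moved only within the $K$-orbit, then correct by a Weyl-group element). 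One must be slightly careful that the length $\bar t_0$ is preserved throughout, which it is because $K$ acts by isometries. Modulo this standard but slightly fiddly symmetric-space bookkeeping, the proposition follows directly from Proposition 3.1.
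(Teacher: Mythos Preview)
The paper gives no argument here: it writes only ``From Proposition~[4.1], obviously, we have'' and states the result, which is taken from Sakai \cite{TS1}. Your reduction to the general cut-point dichotomy is exactly what the authors have in mind, and your proposal is already more detailed than the paper's own treatment.

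You are also right that the one nontrivial step is replacing the competing direction $Z\in\mathfrak m$ by some $Y\in\mathfrak a$, and right to flag it as the main obstacle. However, the mechanism you sketch---conjugate $\mathfrak a'\ni Z$ into $\mathfrak a$ by some $k\in K$, then repair the displaced endpoint via a Weyl-group element---does not work as written: the $k$ carrying $\mathfrak a'$ to $\mathfrak a$ will in general move $p=\exp_o(\bar t_0 X)$, and the Weyl group acts on $\mathfrak a$ rather than on the set of minimal geodesics from $o$ to $p$, so there is no evident correction available. The honest statement is that this normalization is the entire content of the proposition; its proof (in \cite{TS1}, around Theorem~2.5) uses specific structure of compact symmetric spaces---essentially that the cut value of $X\in\mathfrak a$ in $M$ coincides with its cut value in the flat torus $\mathfrak a/\Gamma(G,K)$, which is also what underlies the explicit formula in the paper's subsequent Lemma. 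So your outline is correct, but the step you call ``standard but slightly fiddly bookkeeping'' is where the real work lies, and the paper simply imports it from Sakai rather than reproving it.
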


Now given
a unit tangent vector $X \in \mathfrak{a}$, $|X|=1$, we want to
compute $\bar{t}_0(X)$ to determine the cut point of $o$ along the
geodesic $\gamma_X$. By the above proposition,
$\bar{t}_0(X)=\min\{t_0(X), \tilde{t}_0(X)\}$, where $t_0(X)$ is
given by the first conjugate point of $o$ along $\gamma_X$,
$\tilde{t}_0(X)$ is the minimum positive value such that
$\exp_o\tilde{t}_0(X)X=\exp_o\tilde{t}_0(X)Y$ holds for some unit
tangent vector $Y\in \mathfrak{a}$, $Y\neq X$.

\begin{rmk}
For simply-connected Riemannian symmetric spaces, the first conjugate loci
and the cut loci coincide, without the simply-connectedness condition, we have
$\tilde{t}_0(X)\leq t_0(X)$ (see the proof of Theorem 2.5 in \cite{TS1}).
\end{rmk}

\begin{lem}
Let $X=Adk(H) \in \mathfrak{m}$ with $H \in \mathfrak{a}$, $k \in
K$, $|H|=1$. Then
\[
\bar{t}_0(X)=\bar{t}_0(H)=\min_{\{A \in
\Gamma(G,K)-\{0\}\}}\frac{\langle A,A \rangle }{2|\langle H,A
\rangle|},
\]
where $\Gamma(G,K):=\{A \in \mathfrak{a}: \exp A\in
K\}$ is the \textbf{unit lattice}.
\end{lem}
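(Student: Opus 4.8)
The plan is to reduce the computation of the tangent cut point to a statement purely about the flat torus $\exp_o(\mathfrak{a})$ and then invoke the structure of the unit lattice. First I would use the $\mathrm{Ad}K$-invariance already observed in the text: since $X=\mathrm{Ad}k(H)$ and $\mathrm{Ad}k$ acts on $\mathfrak{m}$ as an isometry fixing $o$, the geodesic $\gamma_X$ is carried to $\gamma_H$ by the isometry induced by $k$, so $\bar t_0(X)=\bar t_0(H)$; this disposes of the first equality and lets me assume $X=H\in\mathfrak{a}$ from now on. By Proposition 4.3, $\bar t_0(H)=\min\{t_0(H),\tilde t_0(H)\}$, and by Remark 4.3 we always have $\tilde t_0(H)\le t_0(H)$, so in fact $\bar t_0(H)=\tilde t_0(H)$: it is governed entirely by the first time two geodesics from $\mathfrak{a}$ meet.

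Next I would compute $\tilde t_0(H)$ explicitly. The flat subspace $A=\exp_o(\mathfrak{a})$ is a totally geodesic flat torus $\mathfrak{a}/\Gamma(G,K)$, and $\exp_o(tH)=\exp_o(sY)$ for $Y\in\mathfrak a$ holds precisely when $tH-sY\in\Gamma(G,K)$. Setting $s=t$ (one checks the minimal meeting occurs with equal parameters, since both are unit-speed geodesics meeting at equal distance — this follows from Proposition 4.2(b)), the condition becomes $t(H-Y)\in\Gamma(G,K)\setminus\{0\}$, i.e. $tH-tY=A$ for some nonzero lattice vector $A$ and some unit vector $Y$. For fixed nonzero $A\in\Gamma(G,K)$, I want the smallest $t>0$ such that $Y:=H-A/t$ is a unit vector; writing $|Y|^2=1$ gives $|H|^2-2\langle H,A\rangle/t+|A|^2/t^2=1$, and since $|H|=1$ this simplifies to $|A|^2/t^2=2\langle H,A\rangle/t$, hence $t=\frac{|A|^2}{2\langle H,A\rangle}=\frac{\langle A,A\rangle}{2\langle H,A\rangle}$, which is positive exactly when $\langle H,A\rangle>0$. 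Replacing $A$ by $-A$ covers the case $\langle H,A\rangle<0$, so the smallest valid $t$ over all nonzero lattice vectors is $\min_{A\in\Gamma(G,K)\setminus\{0\}}\frac{\langle A,A\rangle}{2|\langle H,A\rangle|}$, with the convention that terms with $\langle H,A\rangle=0$ are dropped (they contribute no finite meeting time).

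Finally I would argue that this minimum indeed equals $\tilde t_0(H)$ and not merely an upper bound: any meeting $\exp_o(tH)=\exp_o(tY)$ with $Y\in\mathfrak a$, $Y\ne H$, forces $t(H-Y)$ to be a nonzero element of $\Gamma(G,K)$, so every candidate meeting time is of the form above for some nonzero $A=t(H-Y)$, giving the reverse inequality. One subtlety to check carefully is whether the cut point could instead be a conjugate point strictly before any such torus-meeting — but Remark 4.3 rules this out, so $\bar t_0(H)=\tilde t_0(H)$ is exactly the displayed minimum. The main obstacle I anticipate is justifying rigorously that the first meeting of two $\mathfrak a$-geodesics occurs at equal parameter values $s=t$ (so that the problem really is the lattice minimization above rather than a two-parameter problem); I expect this to follow from Proposition 4.2(b) together with the fact that $\exp_o$ restricted to $\mathfrak a$ is the covering $\mathfrak a\to\mathfrak a/\Gamma(G,K)$ of a flat torus, where minimizing geodesics between two points all have the same length, but this deserves a clean sentence or two rather than being taken for granted.
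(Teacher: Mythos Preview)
Your argument is correct and is exactly the standard derivation from the surrounding material. Note, however, that the paper does not actually prove this lemma: it is stated as a known result drawn from Sakai's work \cite{TS1}, so there is no ``paper's proof'' to compare against beyond the scaffolding of Proposition~4.2 and Remark~4.2 that you already use.

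Two small points. First, your references are off by one: what you call ``Proposition~4.3'' and ``Remark~4.3'' are Proposition~4.2 and Remark~4.2 in the paper. Second, your worry about justifying $s=t$ is not really a gap: the paper's definition of $\tilde t_0(H)$ already fixes equal parameters (``the minimum positive value such that $\exp_o \tilde t_0(X)X=\exp_o \tilde t_0(X)Y$''), and Proposition~4.2 guarantees the competing geodesic can be chosen with direction $Y\in\mathfrak a$. From there your lattice computation $t(H-Y)=A\in\Gamma(G,K)\setminus\{0\}$, $|Y|=1\Rightarrow t=\langle A,A\rangle/(2\langle H,A\rangle)$ is clean and gives both inequalities, so the displayed formula follows.
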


Now let $H$ be a compact Lie group, if we put $G=H\times H$,
$K=\{(x,x)|x \in H\}$ and the involution $\sigma$ of $G$:
$\sigma(x,y)=(y,x)$, then $(G,K)$ is a compact Riemannian symmetric
pair corresponding to $H \cong G/H$. Let $\mathfrak{h}$ be the Lie algebra of $H$ and
$\mathfrak{a}^{*} \subset \mathfrak{h}$ be a maximal abelian
subalgebra of $\mathfrak{h}$, then $\mathfrak{m}=\{(X,-X):X \in
\mathfrak{h}\}$ and $\mathfrak{a}:=\{(Y,-Y): Y \in
\mathfrak{a}^{*}\}$ is a Cartan subalgebra of $(G,K)$.

\begin{lem}
Let $X \in \mathfrak{a}^{*}$ with $|X|=1$, then
$\bar{t}_0(X)=\min_{\{A \in \Gamma(H)-\{0\}\}}\frac{\langle A,A
\rangle }{2|\langle X, A \rangle|}$, where $\Gamma(H):=\{A \in
\mathfrak{a}^{*}: \exp A=e\}$.
\end{lem}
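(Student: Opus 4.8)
The plan is to deduce this from Lemma 4.3 (the general formula for tangent cut points in a compact symmetric space) by specializing to the symmetric pair $(G,K) = (H\times H,\Delta H)$ associated to the group manifold $H\cong G/K$. First I would recall the standard identification of this symmetric pair: $\mathfrak g = \mathfrak h\oplus\mathfrak h$, the isotropy algebra is the diagonal $\mathfrak k = \{(Z,Z):Z\in\mathfrak h\}$, and the $(-1)$-eigenspace of $\sigma$ is $\mathfrak m = \{(X,-X):X\in\mathfrak h\}$, which is naturally isomorphic (as a $K$-module, hence isometrically up to the obvious scaling) to $\mathfrak h$ via $(X,-X)\mapsto X$. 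Under this identification a maximal abelian subalgebra $\mathfrak a^*\subset\mathfrak h$ gives the Cartan subalgebra $\mathfrak a = \{(Y,-Y):Y\in\mathfrak a^*\}$ of $(G,K)$, as already noted in the text just before the statement.

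Next I would compute the unit lattice $\Gamma(G,K) = \{A\in\mathfrak a:\exp A\in K\}$ under this identification. Writing $A = (Y,-Y)$ with $Y\in\mathfrak a^*$, the group exponential of $G=H\times H$ sends $A$ to $(\exp Y,\exp(-Y))$, and this lies in the diagonal $K=\Delta H$ if and only if $\exp Y = \exp(-Y)$ in $H$, i.e. $\exp(2Y)=e$. So $\Gamma(G,K)$ corresponds, under $A=(Y,-Y)\leftrightarrow 2Y$, to the lattice $\{B\in\mathfrak a^*:\exp B = e\} = \Gamma(H)$; the factor of $2$ here is exactly what will produce the stated normalization. (One has to be a little careful about whether the metric on $\mathfrak m$ is normalized so that $(X,-X)\mapsto X$ is an isometry or a homothety by $\sqrt 2$; I would fix the inner product convention $\langle\cdot,\cdot\rangle$ consistently with Lemma 4.3 and check that all scaling factors cancel in the final quotient, since the expression $\frac{\langle A,A\rangle}{2|\langle H,A\rangle|}$ is homogeneous of degree $0$ in the metric.)

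Then I would simply substitute into Lemma 4.3. For $X\in\mathfrak a^*$ with $|X|=1$, the corresponding unit vector in $\mathfrak a$ is (a scalar multiple of) $(X,-X)$, and Lemma 4.3 gives
\[
\bar t_0(X) = \min_{A\in\Gamma(G,K)-\{0\}}\frac{\langle A,A\rangle}{2|\langle \widetilde X, A\rangle|},
\]
where $\widetilde X\in\mathfrak a$ corresponds to $X$. Writing $A = (Y,-Y)$ with $2Y$ ranging over $\Gamma(H)-\{0\}$ and using $\langle(X,-X),(Y,-Y)\rangle = 2\langle X,Y\rangle$ (and likewise for the norm), the factors of $2$ and $4$ cancel between numerator and denominator and the minimum over $A\in\Gamma(G,K)-\{0\}$ becomes the minimum over $B=2Y\in\Gamma(H)-\{0\}$, yielding $\bar t_0(X) = \min_{B\in\Gamma(H)-\{0\}}\frac{\langle B,B\rangle}{2|\langle X,B\rangle|}$ as claimed.

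The only real obstacle is bookkeeping the metric normalization and the factor-of-$2$ between the diagonal embedding and $\mathfrak h$ — making sure the inner product used on $\mathfrak a$ in Lemma 4.3 matches the one used on $\mathfrak a^*\subset\mathfrak h$ in the statement, so that the ratio in the formula is genuinely unchanged. Once that is pinned down, the proof is a direct substitution; no new geometric input is needed beyond Lemma 4.3 and the explicit description of the group manifold as a symmetric space.
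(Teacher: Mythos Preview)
The paper does not supply its own proof of this lemma (nor of Lemma~4.3); both are quoted from Sakai's work \cite{TS1}, so there is no in-paper argument to compare against. Your plan --- specialize Lemma~4.3 to the pair $(G,K)=(H\times H,\Delta H)$ and translate the unit lattice $\Gamma(G,K)$ into $\Gamma(H)$ --- is exactly the natural derivation, and your identification $\Gamma(G,K)=\{(Y,-Y):2Y\in\Gamma(H)\}$ is correct.

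One caution on the bookkeeping you flag: with the product metric on $\mathfrak h\oplus\mathfrak h$ the factors do \emph{not} simply cancel inside the quotient. If $|X|_{\mathfrak h}=1$ then the unit vector in $\mathfrak a$ is $\tilde X=(X,-X)/\sqrt 2$, and Lemma~4.3 gives $\bar t_0(\tilde X)=\min_{B\in\Gamma(H)\setminus\{0\}}\tfrac{\langle B,B\rangle}{2\sqrt 2\,|\langle X,B\rangle|}$, off by $\sqrt 2$ from the claim. The missing factor is the homothety ratio of the identification $\psi:G/K\to H$, $(g_1,g_2)K\mapsto g_1g_2^{-1}$, whose differential $(Y,-Y)\mapsto 2Y$ scales lengths by $\sqrt 2$ in the product metric. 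Equivalently, if you equip $\mathfrak m$ with $\langle(Y,-Y),(Z,-Z)\rangle:=4\langle Y,Z\rangle_{\mathfrak h}$ so that $\psi$ becomes an isometry, then $\tilde X=(X/2,-X/2)$ is unit, $A=(B/2,-B/2)$ for $B\in\Gamma(H)$, and substitution into Lemma~4.3 yields exactly $\tfrac{\langle B,B\rangle}{2|\langle X,B\rangle|}$ with no stray constants. So your outline is sound, but the remark that the ratio is ``homogeneous of degree~$0$ in the metric'' is misleading: that homogeneity holds only with the vector in the denominator held fixed, whereas the unit-length constraint $|H|=1$ ties it to the metric, and it is precisely this constraint that carries the normalization between $H$ and $G/K$.
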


Note that the unit lattice $\Gamma(H):=\{A \in \mathfrak{a}^{*}: \exp A=e\}$ is just the
\textbf{integral lattice} of the compact Lie group $H$.

By the above lemmas, we have the following theorem.
\begin{thm}
Let $M=G/K$ be a compact symmetric space. Then the tangent cut loci
of $o$ is given by $\mathrm{Ad}K(\bigcup\{\bar{t}_0(X)X;X \in
\mathfrak{a}, |X|=1\})$, i.e. the tangent cut loci of $o$ is
determined by the tangent cut loci of $o$ in the flat torus
$\mathfrak{a} / \Gamma(G,K)$.
\end{thm}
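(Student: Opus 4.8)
The plan is to reduce the computation of the tangent cut locus of $o$ to a computation in the flat torus $\mathfrak{a}/\Gamma(G,K)$, exploiting the fact that $\mathrm{Ad}K$ acts transitively on the unit spheres of the orbits meeting $\mathfrak{a}$ and by isometries on $\mathfrak{m}$. First I would recall that since $M=G/K$ is a compact symmetric space, every geodesic through $o$ has the form $\gamma_X(t)=\exp_o(tX)$ for some $X\in\mathfrak{m}$, and since $\mathrm{Ad}K(\mathfrak{a})=\mathfrak{m}$ we may write $X=\mathrm{Ad}k(H)$ with $H\in\mathfrak{a}$, $|H|=1$, $k\in K$. Because $\mathrm{Ad}k$ is an isometry of $\mathfrak{m}$ commuting with $\exp_o$ in the appropriate equivariant sense (it intertwines $\gamma_X$ with $k\cdot\gamma_H$), the tangent cut point along $\gamma_X$ is the $\mathrm{Ad}k$-image of the tangent cut point along $\gamma_H$: that is, $\bar t_0(X)X=\mathrm{Ad}k(\bar t_0(H)H)$. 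This already shows the tangent cut locus equals $\mathrm{Ad}K\bigl(\bigcup\{\bar t_0(H)H : H\in\mathfrak{a}, |H|=1\}\bigr)$, so the content is to identify $\bar t_0(H)$ for $H\in\mathfrak{a}$ with the cut-point value in the flat torus $\mathfrak{a}/\Gamma(G,K)$.

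Next I would invoke Proposition 4.3 together with Lemma 4.1: for a unit vector $H\in\mathfrak{a}$, the tangent cut value $\bar t_0(H)$ equals $\min\bigl\{t_0(H),\tilde t_0(H)\bigr\}$, where $t_0(H)$ is the first conjugate value and $\tilde t_0(H)$ is the least $t>0$ with $\exp_o(tH)=\exp_o(tY)$ for some unit $Y\in\mathfrak{a}$, $Y\neq H$. By Remark 4.4 we have $\tilde t_0(H)\le t_0(H)$, so in fact $\bar t_0(H)=\tilde t_0(H)$, and Lemma 4.1 gives the closed-form
\[
\bar t_0(H)=\min_{A\in\Gamma(G,K)\smallsetminus\{0\}}\frac{\langle A,A\rangle}{2\,|\langle H,A\rangle|}.
\]
The key observation is that this is exactly the formula for the tangent cut value of $0$ along the direction $H$ in the flat torus $\mathfrak{a}/\Gamma(G,K)$: in a flat torus $V/\Lambda$, the geodesic $t\mapsto tH+\Lambda$ first meets another geodesic from $0$ (equivalently, first fails to be minimizing) at the time $t$ where $tH$ lies on the boundary of the Dirichlet–Voronoi cell of $\Lambda$, i.e. where $\langle tH,A\rangle=\tfrac12\langle A,A\rangle$ for some $A\in\Lambda\smallsetminus\{0\}$, which is precisely the minimum above. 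Since a flat torus has no conjugate points, the cut value there is governed entirely by this "equidistant geodesic" condition, matching $\tilde t_0(H)$.

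Assembling these pieces: the tangent cut locus of $o$ in $M$ is $\mathrm{Ad}K$ applied to the set of vectors $\bar t_0(H)H$ with $H\in\mathfrak{a}$ of unit length, and for each such $H$ the value $\bar t_0(H)$ coincides with the corresponding cut value in $\mathfrak{a}/\Gamma(G,K)$; hence the tangent cut locus of $o$ in $M$ is the $\mathrm{Ad}K$-orbit of the tangent cut locus of $0$ in $\mathfrak{a}/\Gamma(G,K)$, which is the assertion. The main obstacle I anticipate is the careful justification of the equivariance step — that passing from $X$ to $H=\mathrm{Ad}k^{-1}(X)$ genuinely transports tangent cut points, i.e. that $\mathrm{Ad}k$ maps Jacobi fields to Jacobi fields and minimizing geodesics to minimizing geodesics — and, slightly more subtly, the verification via Proposition 4.3 that it suffices to test against competing geodesics whose initial vectors lie in the same $\mathfrak{a}$ rather than anywhere in $\mathfrak{m}$; this is where the structure theory of symmetric spaces (all of $\mathfrak{m}$ being a single $\mathrm{Ad}K$-orbit sweep of $\mathfrak{a}$, and $\exp_o$ being $\mathrm{Ad}K$-equivariant) does the real work. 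The conjugate-point bookkeeping (that $t_0(H)$ never beats $\tilde t_0(H)$) is already quarantined in Remark 4.4, so I would simply cite it.
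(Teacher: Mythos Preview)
Your proposal is correct and follows essentially the same approach the paper intends: the paper states this theorem with the single line ``By the above lemmas, we have the following theorem'' and gives no further argument, so your write-up is a faithful elaboration of that deduction --- reduce to $\mathfrak{a}$ via $\mathrm{Ad}K$-equivariance, then identify $\bar t_0(H)$ with the lattice formula from the preceding lemma, which is precisely the Dirichlet--Voronoi cut value in the flat torus $\mathfrak{a}/\Gamma(G,K)$. Your anticipated obstacles (equivariance of cut points, and that competing geodesics may be taken with initial vector in $\mathfrak{a}$) are exactly what the paper's Proposition preceding the lemma and the remark $\tilde t_0(X)\le t_0(X)$ are there to handle, so nothing is missing.
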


If the unit lattice $\Gamma(G,K)$ is orthonormal, then for any
unit vector $X=\sum x_iA_i$ in $\mathfrak{a}$, the tangent cut point
along direction $X$ is determined by $\bar{t}_0(X)=1/(2\max|x_i|)$. In fact, if we put $\alpha^2=\alpha_i^2=\langle A_i,A_i\rangle$,
then $\sum (x_i)^2(\alpha_i)^2=1$. For any non-zero $A=\sum m_iA_i \in
\Gamma(G,K)$, we have
\begin{align*}
\frac{\langle A,A \rangle}{2|\langle X,A\rangle |} &
=\frac{\alpha^2({m_1}^2+\cdots +{m_r}^2)}{2\alpha^2|m_1x_1+\cdots +m_rx_r|} \\
& \geq \frac{({m_1}^2+\cdots +{m_r}^2)}{2\max |x_i|(|m_1|+\cdots
+|m_r|)} \\
& \geq \frac{1}{2\max |x_i|}
\end{align*}
and if $\max|x_i|=|x_{i_{0}}|$, then for taking $m_i=\delta_{i,i_{0}}$, the $"="$ holds. Since
the unit lattice of a symmetric R-space is always orthonormal, we have the following theorem \cite{HN}:

\begin{thm}
Let $M=G/K$ be a symmetric R-space, then the tangent cut loci
of $o$ is given by $\mathrm{Ad}K(\bigcup\{1/(2\max|x_i|)X;X=\sum x_iA_i \in
\mathfrak{a}, |X|=1\})$.
\end{thm}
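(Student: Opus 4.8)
The plan is to combine the two ingredients already assembled in this section: (1) the general description of tangent cut loci of a compact symmetric space in terms of the flat torus $\mathfrak{a}/\Gamma(G,K)$ (Theorem~4.2), and (2) the orthonormality of the unit lattice of a symmetric R-space, which is Loos's characterization recalled in Section~4.1. By Theorem~4.2, the tangent cut locus of $o$ is $\mathrm{Ad}K(\bigcup\{\bar t_0(X)X : X \in \mathfrak{a},\ |X|=1\})$, so the entire statement reduces to the purely Euclidean claim that, for a unit vector $X = \sum x_i A_i$ written in the orthonormal $\mathbb{Z}$-basis $\{A_1,\dots,A_r\}$ of $\Gamma(G,K)$, one has $\bar t_0(X) = 1/(2\max_i |x_i|)$.

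First I would invoke Lemma~4.3 (via Theorem~4.2), which gives $\bar t_0(X) = \min_{A \in \Gamma(G,K)\setminus\{0\}} \frac{\langle A,A\rangle}{2|\langle H,A\rangle|}$ for $X = \mathrm{Ad}k(H)$ with $H \in \mathfrak{a}$; since $\mathrm{Ad}K$ acts by isometries on $\mathfrak{m}$, it suffices to treat $X \in \mathfrak{a}$ itself. Next I would set up the orthonormal coordinates: writing $\alpha^2 = \langle A_i, A_i\rangle$ (the common squared length, by property (ii) of orthonormality) and $A = \sum m_i A_i$ for arbitrary $m_i \in \mathbb{Z}$ not all zero, properties (i) and (iii) give $\langle A,A\rangle = \alpha^2 \sum m_i^2$ and $\langle X, A\rangle = \alpha^2 \sum m_i x_i$, while the unit condition $|X|=1$ reads $\alpha^2 \sum x_i^2 = 1$. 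This is exactly the computation displayed just before the theorem in the excerpt, so the core inequality chain
\[
\frac{\langle A,A\rangle}{2|\langle X,A\rangle|} = \frac{m_1^2 + \cdots + m_r^2}{2|m_1 x_1 + \cdots + m_r x_r|} \geq \frac{m_1^2 + \cdots + m_r^2}{2\max_i|x_i|\,(|m_1| + \cdots + |m_r|)} \geq \frac{1}{2\max_i|x_i|}
\]
establishes the lower bound, and taking $m_i = \delta_{i,i_0}$ where $\max_i|x_i| = |x_{i_0}|$ achieves equality, so the minimum is $1/(2\max_i|x_i|)$.

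Finally I would assemble the pieces: substituting $\bar t_0(X) = 1/(2\max_i|x_i|)$ into the formula of Theorem~4.2 yields precisely the asserted description of the tangent cut locus of $o$ as $\mathrm{Ad}K(\bigcup\{1/(2\max|x_i|)X : X = \sum x_i A_i \in \mathfrak{a},\ |X|=1\})$. Honestly, there is no serious obstacle here — the theorem is essentially a corollary obtained by feeding the orthonormality property into the general cut-locus formula, and the only thing to be careful about is the bookkeeping with the normalization constant $\alpha$ and verifying that the second inequality $\sum m_i^2 \geq \sum |m_i|$ holds for nonzero integer vectors (which is immediate since $|m_i| \geq 1 \Rightarrow m_i^2 \geq |m_i|$). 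The mild subtlety worth a sentence of care is that equality in the first inequality forces all the mass of $A$ to sit on indices achieving $\max_i|x_i|$ and equality in the second forces $A$ to be supported on a single index, which is consistent and realized by $A = A_{i_0}$; one should also note that $A_{i_0} \in \Gamma(G,K)$ by property (iii), so this minimizer is legitimate.
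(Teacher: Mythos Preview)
Your proposal is correct and follows essentially the same approach as the paper: the paper reduces via Theorem~4.2 and Lemma~4.3 to computing $\bar t_0(X)$ for $X\in\mathfrak{a}$, then uses orthonormality of the unit lattice together with the identical inequality chain $\frac{\sum m_i^2}{2|\sum m_i x_i|} \geq \frac{\sum m_i^2}{2\max|x_i|\sum|m_i|} \geq \frac{1}{2\max|x_i|}$, with equality at $m_i=\delta_{i,i_0}$. Your additional remarks on why $\sum m_i^2 \geq \sum|m_i|$ holds and why $A_{i_0}\in\Gamma(G,K)$ are welcome clarifications but do not depart from the paper's argument.
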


\bigskip

\subsection{The embedding using cut loci}
Let $M^c=G^c/K$ be a \textbf{symmetric R-space} and $M^n=G^n/K$ be its
noncompact dual. We want to
construct a natural embedding $M^n \hookrightarrow M^c$.
The exponential map of any noncompact symmetric space is a
diffeomorphism. Let $R:=\{tX \in T_oM^c| |X|=1, t <\bar{t}_0(X) \}
\subset T_{o}M^c$, where $\bar{t}_0(X)$ is a positive number such
that $\bar{t}_0(X)X$ is the tangent cut point of $o$ along
$\gamma_X$. Then $\mathrm{Int}(o):=M^c\backslash
\mathrm{Cut}(o)=\exp_o(R) \subset M^c$ is an open cell, with the
exponential map a diffeomorphism from $R$ to $\mathrm{Int}(o)$. Now
we only need to construct an injective map $h: T_oM^n \rightarrow
T_oM^c$ such that $\mathrm{Im}(h)\subset R$, then the map
$\textbf{\textit{f}}=\exp^c_o \circ h \circ (\exp^n_o)^{-1}$: $M^n \rightarrow
T_oM^n \rightarrow
T_oM^c \rightarrow M^c$ is an
open embedding.

\begin{eg}
The injective map from positive line to circle.

For any $\exp(\sqrt{-1}\theta) \in S^1$, $\exp(t) \in \mathbb{R}^+$, where
$\theta ,t \in \mathbb{R}$, the stereographic projection maps
$\exp(\sqrt{-1}\theta)$ to $\exp(t)$ if and only if
$\tan(\frac{\theta}{2})=\tanh(\frac{t}{2})$. So we can define an
injective map from $\mathbb{R}^+$ to $S^1$ as follows:
\[
i:\mathbb{R}^+ \rightarrow S^1.
\]
\[
\exp(t) \mapsto \exp(2\sqrt{-1}\arctan(\tanh\frac{t}{2})).
\]
\end{eg}

Denote the Cartan decompositions of the corresponding symmetric spaces as
$\mathfrak{g}_n=\mathfrak{k} \oplus \mathfrak{ m}_n$ for $M^n$ and
$\mathfrak{g}_c=\mathfrak{k} \oplus
\sqrt{-1}\mathfrak{m}_n=\mathfrak{k} \oplus \mathfrak{m}_c$ for $M^c$. Let
$\mathfrak{a}_n \subset \mathfrak{m}_n$,
$\mathfrak{a}_c=\sqrt{-1}\mathfrak{a}_n \subset
\sqrt{-1}\mathfrak{m}_n=\mathfrak{m}_c$ be the maximal abelian
subspaces of $\mathfrak{m}_n$, $\mathfrak{m}_c$ respectively.
We will use $X$ to represent vector in
$\mathfrak{m}_n$, and $\sqrt{-1}X$ to represent the corresponding
vector in $\mathfrak{m}_c$.

First we construct an injective map from $\mathfrak{a}_n$ to
$\mathfrak{a}_c$. As before, for the compact symmetric space $M^c$,
the unit lattice
$\Gamma(G^c,K)$ is a lattice in $\mathfrak{a}_c$ which is
generated by $\{\sqrt{-1}A_1, \cdots, \sqrt{-1}A_r\}$, where $r=\mathrm{rank}(M)$. Since $M^c$ is a symmetric R-space,
we have $A_i \bot A_j$ and $|A_i|=|A_j|$ for any $i \neq j \in \{1,2,
\cdots, r\}$.
Note that for any
$\sqrt{-1}X=(\sqrt{-1}x_1A_1, \cdots, \sqrt{-1}x_rA_r)$ in
$\mathfrak{a}_c$, $\sqrt{-1}X$ is a tangent cut point if and only if $\max|x_i|=1/2$.

Define:
\[
h:\mathfrak{a}_n \rightarrow \mathfrak{a}_c,
\]
\[
(x_1A_1, \cdots ,x_rA_r) \mapsto -(\frac{\arctan(\tanh{\pi x_1})}{\pi}\sqrt{-1}A_1,\cdots,
\frac{\arctan(\tanh{\pi x_r})}{\pi}\sqrt{-1}A_r),
\]
note here the function $a(x):=\arctan(\tanh{\pi x})/\pi$ is a monotone increasing odd function.
 Since $\arctan(\tanh{\pi x})/\pi \in (-\frac{1}{4},\frac{1}{4})$,
we have $Im(h)=\frac{R}{2} \bigcap \mathfrak{a}_c$, where
$\frac{R}{2}=\{\sqrt{-1}tX \in \mathfrak{m}_c||X|=1, t
<\frac{\bar{t}_0(\sqrt{-1}X)}{2}\}$.

Now we want to extend $h$ to the whole tangent space. For any
tangent vector $X \in \mathfrak{m}_n$, there exists
$k \in K$, such that
$X=\mathrm{Ad}k(H)$ for a unique $H \in \mathfrak{a}_n$. Then
we define $h^{'}(X)=\mathrm{Ad}k(h(\mathrm{Ad}k^{-1}X))$.

It is easy to see that such a $h^{'}$ does not depend on the choice of the maximal abelian subspace
$\mathfrak{a}_n$ and $k \in K$.
In fact, $\mathrm{Ad}k\cdot \mathfrak{a}_n=\mathfrak{a}_n^{'}$ is
also a maximal abelian subspace in $\mathfrak{m}_n$ and $X \in \mathfrak{a}_n^{'}$.
Consider
their corresponding compact duals $\mathfrak{a}_c$ and
$\mathfrak{a}_c^{'}=\mathrm{Ad}k\cdot \mathfrak{a}_c$, if
$\{\sqrt{-1}A_1, \cdots ,\sqrt{-1}A_r\}$ are generators of
$\Gamma(G^c,K)$, then $\{\sqrt{-1}A_1^{'}=\mathrm{Ad}k \cdot
\sqrt{-1}A_1, \cdots, \sqrt{-1}A_r^{'}=\mathrm{Ad}k \cdot
\sqrt{-1}A_r\}$ are generators of the lattice $\Gamma(G^c,K)^{'}$.
Write $X=\sum x_i^{'}A_i^{'}$, then under the map $h^{'}$ defined above:
\[
X=(x_1^{'}A_1^{'}, \cdots ,x_r^{'}A_r^{'})
\xrightarrow{\mathrm{Ad}k^{-1}} (x_1^{'}A_1, \cdots ,x_r^{'}A_r)
\]
\[\xrightarrow{h} -(\frac{\arctan(\tanh
{\pi x_1^{'}})}{\pi}\sqrt{-1}A_1,\cdots,
\frac{\arctan(\tanh{\pi x_r^{'}})}{\pi}\sqrt{-1}A_r),
\]
\[\xrightarrow{\mathrm{Ad}k} -(\frac{\arctan(\tanh
{\pi x_1^{'}})}{\pi}\sqrt{-1}A_1^{'},\cdots,
\frac{\arctan(\tanh{\pi x_r^{'}})}{\pi}\sqrt{-1}A_r^{'}),
\]
which has the same form with the map $h$ on $\mathfrak{a}_n$.

From above, we have a globally defined map, also denoted by
$h:\mathfrak{m}_n\rightarrow \mathfrak{m}_c$. We can now
construct an injective map from $M^n$ to $M^c$, $\textbf{\textit{f}}=\exp^c_o \circ h
\circ (\exp^n_o)^{-1}$: $M^n \rightarrow \mathfrak{m}_n \rightarrow
\mathfrak{m}_c \rightarrow M^c$.

Since $\mathrm{Im}(h)=\frac{R}{2}
\subset \mathfrak{m}_c$, $\textbf{\textit{f}}$ is injective and obviously

\begin{prop}
$\mathrm{Im}(\textbf{\textit{f}})=\exp_o^c(\frac{R}{2})$.
\end{prop}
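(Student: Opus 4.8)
The plan is to reduce the statement to the identity $\mathrm{Im}(h)=\frac{R}{2}$ inside $\mathfrak{m}_c$, which was announced just above, and then to supply the verification. Since $M^n$ is of noncompact type, its exponential map $\exp^n_o\colon\mathfrak{m}_n\to M^n$ is a diffeomorphism, so $(\exp^n_o)^{-1}$ is a bijection onto $\mathfrak{m}_n$; hence $\mathrm{Im}(\textbf{\textit{f}})=\exp^c_o\bigl(h(\mathfrak{m}_n)\bigr)=\exp^c_o\bigl(\mathrm{Im}(h)\bigr)$, and the whole proposition comes down to proving $\mathrm{Im}(h)=\frac{R}{2}$.

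I would first check this on the Cartan subalgebra. On $\mathfrak{a}_n$ the map $h$ acts, up to an overall sign, coordinatewise by the function $a(x)=\arctan(\tanh\pi x)/\pi$; since $\tanh$ maps $\mathbb{R}$ onto $(-1,1)$ and $\arctan$ maps $(-1,1)$ onto $(-\pi/4,\pi/4)$, the map $a$ is a strictly increasing homeomorphism of $\mathbb{R}$ onto $(-1/4,1/4)$, so $\mathrm{Im}(h|_{\mathfrak{a}_n})=\{\sqrt{-1}\sum_i y_iA_i : |y_i|<1/4 \text{ for all } i\}$. On the other hand, a nonzero vector $\sqrt{-1}\sum_i y_iA_i$ of $\mathfrak{a}_c$ can be written as $\sqrt{-1}tX$ with $X=\sum_i x_iA_i$ a unit vector, $t>0$ and $y_i=tx_i$; by Theorem 4.2 its tangent cut point occurs at $t=\bar{t}_0(\sqrt{-1}X)=1/(2\max_i|x_i|)$, so the condition $t<\bar{t}_0(\sqrt{-1}X)/2$ defining $\frac{R}{2}$ is equivalent to $\max_i|tx_i|=\max_i|y_i|<1/4$. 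Together with the zero vector this gives exactly $\frac{R}{2}\cap\mathfrak{a}_c=\mathrm{Im}(h|_{\mathfrak{a}_n})$.

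To pass to all of $\mathfrak{m}_c$, I would use $\mathrm{Ad}K$-equivariance. By its construction $h(\mathrm{Ad}k\,Y)=\mathrm{Ad}k\,(h(Y))$ for $Y\in\mathfrak{a}_n$, $k\in K$, so $\mathrm{Im}(h)=\mathrm{Ad}K\cdot\mathrm{Im}(h|_{\mathfrak{a}_n})$ because $\mathfrak{m}_n=\mathrm{Ad}K(\mathfrak{a}_n)$; likewise $|\cdot|$ is $\mathrm{Ad}K$-invariant and, by Lemma 4.3, so is the cut distance $\bar{t}_0$, whence $\frac{R}{2}$ is $\mathrm{Ad}K$-invariant. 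Since $\mathrm{Ad}K(\mathfrak{a}_c)=\mathfrak{m}_c$, this gives $\mathrm{Im}(h)=\mathrm{Ad}K\cdot\bigl(\frac{R}{2}\cap\mathfrak{a}_c\bigr)=\frac{R}{2}$, hence $\mathrm{Im}(\textbf{\textit{f}})=\exp^c_o(\frac{R}{2})$. There is no real obstacle in this argument; the only point needing care is the coordinate description of $\frac{R}{2}\cap\mathfrak{a}_c$, which rests on the explicit formula $\bar{t}_0(\sqrt{-1}X)=1/(2\max_i|x_i|)$ of Theorem 4.2 — precisely the place where the orthonormality of the unit lattice of a symmetric R-space enters.
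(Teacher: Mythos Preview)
Your argument is correct and is exactly the reasoning the paper has in mind: the paper states the proposition as an immediate consequence of the sentence just before it, ``Since $\mathrm{Im}(h)=\frac{R}{2}\subset\mathfrak{m}_c$, $\textbf{\textit{f}}$ is injective and obviously\ldots'', and you have simply written out the verification of $\mathrm{Im}(h)=\frac{R}{2}$ (first on $\mathfrak{a}_c$ via the explicit range of $a(x)$ and Theorem~4.2, then globally by $\mathrm{Ad}K$-equivariance) that the paper leaves implicit. One small correction: the $\mathrm{Ad}K$-invariance of $\bar{t}_0$ is Lemma~4.1 in the paper (the statement $\bar{t}_0(\mathrm{Ad}k\,H)=\bar{t}_0(H)$), not Lemma~4.3, which is the $K$-equivariance of $\textbf{\textit{f}}$ itself.
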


Similar to the generalized embedding \textbf{\textit{g}}, the embedding \textbf{\textit{f}}
is also $K$-equivariant.

\begin{lem}
The embedding \textbf{\textit{f}} is $K$-equivariant.
\end{lem}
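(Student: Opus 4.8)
The plan is to show that the map $\textbf{\textit{f}}=\exp_o^c \circ h \circ (\exp_o^n)^{-1}$ commutes with the $K$-action, and the only thing that needs checking is that the auxiliary map $h:\mathfrak{m}_n \to \mathfrak{m}_c$ is $\mathrm{Ad}K$-equivariant; everything else follows because $\exp_o^n$ and $\exp_o^c$ intertwine the linear isotropy action on the tangent space with the action of $K$ on $M^n$ and $M^c$ respectively (this is a standard property of Riemannian symmetric spaces: $k \cdot \exp_o(X) = \exp_o(\mathrm{Ad}k(X))$). So the proof reduces to verifying $h(\mathrm{Ad}k(X)) = \mathrm{Ad}k(h(X))$ for all $k \in K$ and $X \in \mathfrak{m}_n$, where on the right-hand side $\mathrm{Ad}k$ acts on $\mathfrak{m}_c = \sqrt{-1}\,\mathfrak{m}_n$ in the natural complexified way.

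First I would recall that $h$ was defined in exactly this fashion: for $X \in \mathfrak{m}_n$ one writes $X = \mathrm{Ad}k(H)$ with $H \in \mathfrak{a}_n$, and sets $h(X) = \mathrm{Ad}k\big(h(\mathrm{Ad}k^{-1}X)\big)$, where $h$ on $\mathfrak{a}_n$ is the coordinate-wise formula $(x_iA_i) \mapsto -(a(x_i)\sqrt{-1}A_i)$ with $a(x) = \arctan(\tanh \pi x)/\pi$. The text already argues (the computation with $\mathfrak{a}_n' = \mathrm{Ad}k\cdot \mathfrak{a}_n$ and generators $A_i' = \mathrm{Ad}k\cdot A_i$ of the transported unit lattice) that this is well-defined independent of the chosen maximal abelian subspace and the chosen $k$. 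So I would invoke that well-definedness: for any $k_0 \in K$ and any $X \in \mathfrak{m}_n$, choose a maximal abelian $\mathfrak{a}_n$ containing $X$ and write $X = \mathrm{Ad}e(X)$ trivially; then $\mathrm{Ad}k_0(X)$ lies in $\mathrm{Ad}k_0 \cdot \mathfrak{a}_n$, which is again maximal abelian, and by the formula $h(\mathrm{Ad}k_0(X)) = \mathrm{Ad}k_0\big(h(\mathrm{Ad}k_0^{-1}\mathrm{Ad}k_0(X))\big) = \mathrm{Ad}k_0(h(X))$. The key point is that $\mathrm{Ad}k_0$ carries the unit lattice $\Gamma(G^c,K) \subset \mathfrak{a}_c$ to the unit lattice of the conjugated Cartan subalgebra, preserving the orthonormal basis structure, so the coordinate formula for $h$ transforms covariantly — this is precisely the content of the displayed three-line computation just before Proposition 4.2.

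Then I would assemble the pieces: for $p = \exp_o^n(X) \in M^n$ and $k \in K$,
\[
\textbf{\textit{f}}(k\cdot p) = \exp_o^c\big(h((\exp_o^n)^{-1}(k\cdot p))\big) = \exp_o^c\big(h(\mathrm{Ad}k(X))\big) = \exp_o^c\big(\mathrm{Ad}k(h(X))\big) = k\cdot \exp_o^c(h(X)) = k\cdot \textbf{\textit{f}}(p),
\]
using $(\exp_o^n)^{-1}(k\cdot p) = \mathrm{Ad}k(X)$ in the second equality, the $\mathrm{Ad}K$-equivariance of $h$ in the third, and the equivariance of $\exp_o^c$ in the fourth. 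I do not anticipate a genuine obstacle here — the construction of $h$ was explicitly engineered to be $\mathrm{Ad}K$-equivariant, so the proof is essentially bookkeeping. The one place that deserves a careful sentence rather than a wave of the hand is the claim that $\mathrm{Ad}k$ maps the orthonormal generators $\{\sqrt{-1}A_i\}$ of $\Gamma(G^c,K)$ to orthonormal generators of the unit lattice for the conjugated Cartan subalgebra: this follows because $\mathrm{Ad}k$ is an isometry of $\mathfrak{m}_c$ preserving the integrality condition $\exp A \in K$ (as $k$ normalizes $K$ up to the relation $\exp(\mathrm{Ad}k\,A) = k(\exp A)k^{-1}$), hence preserves both the lattice and the inner product, so the defining properties (i)–(iii) of an orthonormal unit lattice are transported verbatim.
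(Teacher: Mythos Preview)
Your proposal is correct and follows essentially the same approach as the paper. The paper's proof is terser---it simply asserts $h(\mathrm{Ad}k\cdot X)=\mathrm{Ad}k\cdot h(X)$ (relying implicitly on the well-definedness computation preceding the proposition) and then writes the same chain of equalities you give; your version merely spells out the justification for the $\mathrm{Ad}K$-equivariance of $h$ and the intertwining property of the exponential maps in more detail.
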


\begin{proof}
For any $x=e^XK \in M^n$ and $k \in K$,
\[
h(\mathrm{Ad}k\cdot X)=\mathrm{Ad}k \cdot h(X),
\]
\[
\textbf{\textit{f}}(k\cdot x)=\textbf{\textit{f}}(e^{\mathrm{Ad}k\cdot X}K)=e^{h(\mathrm{Ad}k\cdot X)}K
=e^{\mathrm{Ad}k\cdot h(X)}K=k \cdot e^{h(X)}K=k \cdot \textbf{\textit{f}}(x).
\]

\end{proof}

The following proposition is the key argument for the rest of the paper.

\begin{prop}
For any symmetric R-space $M$, if $N$ is a reflective submanifold of $M$
which is also a symmetric R-space, then $\textbf{\textit{f}}_N=\textbf{\textit{f}}_M|_N$.
\end{prop}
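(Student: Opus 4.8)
The plan is to reduce the statement about $\mathbf{\textit{f}}$ to a statement about the linear map $h$ on tangent spaces, and there to the one–dimensional building blocks provided by the orthonormal unit lattices. Write $M^c = G^c/K$, $M^n = G^n/K$ and let $N^c \subset M^c$ be the reflective submanifold with Cartan decomposition $\mathfrak{g}_N = \mathfrak{k}_N \oplus (\mathfrak{m}_N)_c$, where $(\mathfrak{m}_N)_c \subset \mathfrak{m}_c$; its noncompact dual $N^n$ has $(\mathfrak{m}_N)_n \subset \mathfrak{m}_n$. Since $N^c$ is reflective, it is totally geodesic, so $\exp^c$ and $\exp^n$ restrict to the corresponding exponential maps of $N^c$, $N^n$ at $o$. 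Hence it suffices to prove the infinitesimal identity $h_N = h_M|_{(\mathfrak{m}_N)_n}$, i.e. that the map $h$ built from the cut–locus data of $N^c$ agrees with the restriction of the map $h$ built from the cut–locus data of $M^c$.

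First I would choose a maximal abelian subspace $\mathfrak{a}_N \subset (\mathfrak{m}_N)_n$ and extend it to a maximal abelian subspace $\mathfrak{a}_M \subset \mathfrak{m}_n$ (possible since $N^c$ is totally geodesic, so $\mathfrak{a}_N$ is abelian in $\mathfrak{m}_n$ and can be completed). By $K$-equivariance of both $\mathbf{\textit{f}}_M$ (Lemma 4.3) and $\mathbf{\textit{f}}_N$, and by $K_N$-equivariance used to move any vector of $(\mathfrak{m}_N)_n$ into $\mathfrak{a}_N$, it is enough to check the identity on $\mathfrak{a}_N$. On $\mathfrak{a}_N$ the map $h$ is given coordinatewise by the odd function $a(x) = \arctan(\tanh \pi x)/\pi$ applied to the coordinates with respect to an orthonormal $\mathbb{Z}$-basis $\{A_1,\dots,A_s\}$ of the unit lattice $\Gamma(G^c_N,K_N)$; the same holds on $\mathfrak{a}_M$ with an orthonormal $\mathbb{Z}$-basis $\{B_1,\dots,B_r\}$ of $\Gamma(G^c,K)$. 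So the crux is: the restriction to $\mathfrak{a}_N$ of the coordinate system on $\mathfrak{a}_M$ adapted to $\Gamma(G^c,K)$ coincides — up to the orthonormal normalization — with the coordinate system on $\mathfrak{a}_N$ adapted to $\Gamma(G^c_N,K_N)$.

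The key step, then, is to show $\Gamma(G^c_N, K_N) = \Gamma(G^c,K) \cap \mathfrak{a}_N$ and that an orthonormal $\mathbb{Z}$-basis of the latter can be taken to be a subset of (scalar multiples of) an orthonormal $\mathbb{Z}$-basis of $\Gamma(G^c,K)$. The inclusion $\subseteq$ is immediate from the definition $\Gamma = \{A : \exp A \in K\}$ together with $K_N = G^c_N \cap K$ and $\exp(\mathfrak{a}_N) \subset G^c_N$. For the reverse inclusion and the compatibility of bases I would invoke the orthonormality characterization of symmetric R-spaces (Loos, quoted in Section 4.1): both lattices possess orthonormal $\mathbb{Z}$-bases, all basis vectors in each have the same length, and $\mathfrak{a}_N$ is spanned over $\mathbb{R}$ by lattice vectors of $\Gamma(G^c,K)$ (because $N^c$ is reflective, so its root system is a subsystem and its Cartan polyhedron a face of that of $M^c$). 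An orthonormal sublattice of an orthonormal lattice must be generated by a subset of an orthonormal basis after rescaling, giving exactly the coordinate compatibility needed. Once the lattice identification is in hand, applying $a(x)$ coordinatewise obviously commutes with restriction, so $h_N = h_M|_{(\mathfrak{m}_N)_n}$, and composing with the (restricted) exponential maps yields $\mathbf{\textit{f}}_N = \mathbf{\textit{f}}_M|_{N^n}$.

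I expect the main obstacle to be precisely the lattice–basis compatibility: showing not merely that $\Gamma(G^c_N,K_N) = \Gamma(G^c,K)\cap\mathfrak{a}_N$ as lattices, but that one can simultaneously diagonalize — i.e. pick the orthonormal basis of $\Gamma(G^c,K)$ so that the first $s$ vectors span $\mathfrak{a}_N$. This is where the orthonormality of the unit lattice of a symmetric R-space is doing real work; without it the restricted coordinate system and the intrinsic coordinate system of $N^c$ need not match (and indeed the paper flags $\mathrm{SO}(n+2)/\mathrm{SO}(n)\mathrm{SO}(2)$ and the $\mathrm{SO}(n+1)\times\mathrm{SO}(m+1)$ family as the exceptional cases where $\mathbf{\textit{f}}\neq\mathbf{\textit{g}}$, a reminder that such compatibility is delicate). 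Everything else — totally geodesic behavior of $\exp$, the $K$-equivariance reductions, and commuting $a(x)$ with restriction — is routine.
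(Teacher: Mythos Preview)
Your overall strategy matches the paper's: reduce to $h_N = h_M|_{\mathfrak{a}_N}$, verify $\Gamma(H,I)=\Gamma(G,K)\cap\mathfrak{a}'$, and then check coordinate compatibility of the two orthonormal lattice bases. The gap is in the step you yourself flag as the main obstacle. Your proposed resolution, ``an orthonormal sublattice of an orthonormal lattice must be generated by a subset of an orthonormal basis after rescaling,'' is false. Take $\Gamma(G,K)=\mathrm{span}_{\mathbb Z}(e_1,e_2)$ with $e_1\perp e_2$, $|e_1|=|e_2|$, and $\mathfrak{a}'=\mathbb R(e_1+e_2)$; then $\Gamma(G,K)\cap\mathfrak{a}'=\mathrm{span}_{\mathbb Z}(e_1+e_2)$ is (trivially) orthonormal, yet $e_1+e_2$ is not a scalar multiple of any member of an orthonormal $\mathbb Z$-basis of $\Gamma(G,K)$. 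So one cannot in general pick the orthonormal basis of $\Gamma(G,K)$ so that the first $s$ vectors span $\mathfrak{a}_N$, and your ``root subsystem / face of the Cartan polyhedron'' heuristic does not salvage this.

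The paper fills exactly this gap by bringing in structure you have not used: the reflective involution $\rho$. Its differential $d\tilde\rho$ leaves $\mathfrak{a}$ invariant and preserves the orthonormal lattice $\Gamma(G,K)$; then Proposition~5 of Quast--Tanaka \cite{QT} lets one choose the orthonormal basis $\{e_1,\dots,e_r\}$ so that $d\tilde\rho$ either swaps $e_{2j-1}\leftrightarrow e_{2j}$, fixes $e_j$, or negates $e_j$. This yields $\Gamma(H,I)=\mathrm{span}_{\mathbb Z}(e_1+e_2,\dots,e_{2p-1}+e_{2p},e_{2p+1},\dots,e_q)$. The hypothesis that $N$ is itself a symmetric R-space forces this lattice to be orthonormal, which rules out the mixed case (since $|e_{2j-1}+e_{2j}|=\sqrt{2}\,|e_k|$), leaving either the pure-swap case or the pure-fixed case. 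In the pure-fixed case your argument goes through verbatim. In the pure-swap case --- precisely the counterexample above --- the sublattice basis consists of diagonals $e_{2j-1}+e_{2j}$, and one checks directly that applying $a(x)=\arctan(\tanh\pi x)/\pi$ coordinatewise in the $\{e_i\}$-basis sends $x(\,\tilde e_{2j-1}+\tilde e_{2j}\,)$ to $-a(x)(e_{2j-1}+e_{2j})$, which is exactly $h_N$. So the restriction identity still holds, but for a reason your sublattice lemma does not see: the diagonal is preserved because both $e$-coordinates are equal, not because the basis aligns.
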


\begin{proof}
Let $M=G/K$ and $\rho:
M\rightarrow M$ be the isometric involution whose fixed point set is $N$,
then $N=H/I$, where $H \subset
G^{\tilde{\rho}}$, $I=H \bigcap K$, $\tilde{\rho}: G \rightarrow G$, $g \mapsto \rho g \rho^{-1}$.
For the corresponding Cartan decomposition, $\mathfrak {g}=\mathfrak{k}\oplus \mathfrak{m}$, $\mathfrak {h}=\mathfrak{i}\oplus \mathfrak{n}$,
$\mathfrak{m} \cong T_{o}M$, $\mathfrak{n}=\mathfrak{m}^{d\tilde{\rho}} \cong T_{o}N$.

Now take $\mathfrak{a}^{'} \subset \mathfrak{n}$ to be a maximal
abelian subspace in $\mathfrak{n}$, extend $\mathfrak{a}^{'}$ to
$\mathfrak{a}$, which is maximal abelian in $\mathfrak{m}$. Define
$\Gamma(G,K):=\{A \in \mathfrak{a}: \exp A \in K\}$ and
$\Gamma(H,I):=\{A^{'} \in \mathfrak{a}^{'}: \exp A^{'} \in I\}$.
Since $I\subset K ~  and  ~ \mathfrak{a}^{'} \subset \mathfrak{a}$,
$\Gamma(H,I) \subset \Gamma(G,K) \bigcap \mathfrak{a}^{'}$;
since $I=H\bigcap K$,
$\Gamma(H,I) \supset \Gamma(G,K) \bigcap \mathfrak{a}^{'}$.
So we have $\Gamma(H,I)=\Gamma(G,K) \bigcap \mathfrak{a}^{'}$.

By assumption, $M$ is a symmetric R-space, its unit lattice is orthonormal, i.e.
$\Gamma(G,K)=span_{\mathbb{Z}}(e_1, \cdots, e_r)$ with $\{e_1, \cdots, e_r\}$ orthonormal
basis of $\mathfrak{a}$ and $r=\mathrm{rank}(M)$. We want to compute $\Gamma(H,I)$. For
any $X \in \Gamma(G,K)$, we have $X \in \Gamma(H,I)$ if and only if $d\tilde{\rho}(X)=X$.
Consider the map $d\tilde{\rho}: \mathfrak {g} \rightarrow \mathfrak {g}$, by Observation 4 in \cite{QT}, $\mathfrak{a}$
is invariant under $d\tilde{\rho}$. Since $d\tilde{\rho}$ is the differential of an isometric involution that leaves
$\mathfrak{a}$ invariant, $d\tilde{\rho}|_{\mathfrak{a}}$ is an orthogonal transformation of $\mathfrak{a}$ that squares identity and hence preserves the
unit lattice $\Gamma(G,K) \in \mathfrak{a}$. By Proposition 5 in \cite{QT}, we can choose $\{e_1, \cdots, e_r\}$ such that
\begin{align*}
& (i)  ~ d\tilde{\rho}(e_{2j})=e_{2j-1}  ~ for  ~  1\leq j \leq p \\
& (ii)  ~ d\tilde{\rho}(e_{j})=e_{j}  ~ for  ~  2p+1\leq j \leq q \\
& (iii)  ~ d\tilde{\rho}(e_{j})=-e_{j}  ~ for  ~  q+1\leq j \leq r,
\end{align*}
where $0 \leq 2p \leq q \leq r$.

Then $\Gamma(H,I)=span_{\mathbb{Z}}(e_1+e_2, \cdots, e_{2p-1}+e_{2p}, e_{2p+1}, \cdots, e_q)$.

With the assumption that $N$ is also a symmetric R-space,  $\Gamma(H,L)$ must be orthonormal. That means $\Gamma(H,I)$
must either be $span_{\mathbb{Z}}(e_1+e_2, \cdots, e_{2p-1}+e_{2p})$ or $span_{\mathbb{Z}}(e_1, \cdots, e_q)$, where $2p\leq r$, $q\leq r$.

When $\Gamma(H,I)=span_{\mathbb{Z}}(e_1+e_2, \cdots, e_{2p-1}+e_{2p})$, the map $\textbf{\textit{f}}_N$ is induced by $h_N$
(denote $\mathfrak{a}_n^{'}$, $\mathfrak{a}_n$ as $\sqrt{-1}\mathfrak{a}_n^{'}=\mathfrak{a}^{'}$, $\sqrt{-1}\mathfrak{a}_n=\mathfrak{a}$, and $\tilde{e}_i \in \sqrt{-1}\mathfrak{a}_n$ such that $\sqrt{-1}\tilde{e}_i=e_i$)
\[
h_N:\mathfrak{a}_n^{'} \rightarrow \mathfrak{a}^{'},
\]
\[
(x_1(\tilde{e}_1+\tilde{e}_2), \cdots ,x_p(\tilde{e}_{2p-1}+\tilde{e}_{2p})) \mapsto
-(a(x_1)(e_1+e_2),\cdots,
a(x_p)(e_{2p-1}+e_{2p})),
\]
which is just the restriction of $h_M$:
\[
h_M:\mathfrak{a}_n \rightarrow \mathfrak{a},
\]
\[
(x_1\tilde{e}_1,\cdots ,x_r\tilde{e}_r) \mapsto
-(a(x_1)e_1,\cdots,
a(x_r)e_r),
\]
hence $\textbf{\textit{f}}_N=\textbf{\textit{f}}_M|_N$.

When $\Gamma(H,I)=span_{\mathbb{Z}}(e_1, \cdots, e_q)$, it is obviously that $\textbf{\textit{f}}_N=\textbf{\textit{f}}_M|_N$.
\end{proof}

\bigskip

\section{Comparing these embeddings}
We will first compare \textbf{\textit{f}} defined above with the space-like embedding \textbf{\textit{p}},
then compare \textbf{\textit{f}} with the Borel embedding \textbf{\textit{b}}, and finally compare \textbf{\textit{f}} with
the generalized embedding \textbf{\textit{g}}. The images of these embeddings are described using cut loci.

\bigskip

\subsection{Comparing \textbf{\textit{f}} with \textbf{\textit{p}}}
\begin{eg}
The Real Grassmannian $M=\text{O}(n+m)/
\text{O}(n)\text{O}(m)$ $(m\geqslant n)$, following the notations in Example 4.1,
$\mathfrak{a}=span_{\mathbb{R}}(R_{1,n+1},\cdots ,R_{n,2n})$ is a Cartan
subalgebra of $(G,K)$ and $\Gamma(G,K)=\{A \in \mathfrak{a}|\exp A
\in \text{O}(n)\times \text{O}(m)\}=\{\sum_{i=1} ^n m_i\pi
R_{i,n+i}, m_i \in \mathbb{Z}\}$. For any $ X=\sum_{i=1}^n x_i R_{i,n+i} \in \mathfrak{a}$,
$\sum_{i=1}^ {n}x_i^2=1$, we have
$\bar{t}_0(X):=\min_{\{A \in \Gamma(G,K)-\{0\}\}}
\frac{\langle A,A \rangle}{2|\langle X,A \rangle|}=\frac{\pi}{2\max
|x_i|}$. Put

\begin{align*}  & o:=\{(u_1,\cdots ,u_n,0,\cdots,
0)^{t}:u_i \in \mathbb{R}\} \cong \mathbb{R}^n \\ & o^\bot:=\{(0,\cdots,
0,u_{n+1},\cdots, u_{n+m})^{t}:u_i \in \mathbb{R}\} \cong \mathbb{R}^m \\
& W_l:=\{L \in M^c: dim(L\bigcap o^\bot)=l\}, (l=0,1,\cdots, n).
\end{align*}

\textbf{Fact 1}: Cut loci of $o$ in $M^c=\text{O}(n+m)/
\text{O}(n)\text{O}(m)$ is given by $V=W_1\bigcup \cdots \bigcup
W_n=\{L \in M^c: L\bigcap o^\bot \neq \{0\}\}$.

\begin{proof}
For any $ X=\sum_{i=1}^{n}x_i R_{i,n+i} \in \mathfrak{a}$,
$\sum_{i=1}^{ n}x_i^2=1$.

$\exp(tX)\cdot o=\{(u_1\cos tx_{1} , \cdots, u_n\cos tx_{n}
,-u_1\sin tx_{1} , \cdots, -u_{n}\sin tx_{n} ,0,\cdots, 0)^{t}:u_i
\in \mathbb{R}\}$.

For any $0\leq t< \bar{t}_0(X)=\frac{\pi}{2\max |x_i|}$, $\cos tx_{i}\neq0$ for any $i$,
$\exp(tX)\cdot o \in W_0$.

Cut loci of $o$ in $M^c$ is given by all $\exp(\bar{t}_0X)\cdot o$, when $\max|x_i|=|x_{i_1}|=\cdots =|x_{i_k}|> |x_{i_{k+1}}|\geq
\cdots \geq |x_{i_n}|$, $\exp(\bar{t}_0X)\cdot o \in W_k$.
\end{proof}

\textbf{Fact 2}: $M^+=\exp_o^c(\frac{R}{2})$.
\begin{proof}
Recall that $R:=\{tX \in T_oM^c| |X|=1, t <\bar{t}_0(X) \}
\subset T_{o}M^c$, For any $ X=\sum_{i=1}^n x_i R_{i,n+i} \in \mathfrak{a}$, we have
\begin{align*}
0\leq t< \frac{\bar{t}_0(X)}{2}=\frac{\pi}{4\max |x_i|}
& \Longleftrightarrow |tx_i|<\frac{\pi}{4}, ~   ~ \forall i \\
& \Longleftrightarrow (\cos tx_i)^2 > (\sin tx_i)^2 , ~  ~ \forall i \\
& \Longleftrightarrow \exp(tX)\cdot o  ~ is ~ space-like.
\end{align*}

Since the embedding is $K$-equivariant, we have $\exp_o^c(\frac{R}{2})= M^+$.
\end{proof}

That is, for the Real Grassmannian, the following is true:

$ M^n=\{L \subset V\}^+= \exp_o^c(\frac{R}{2})\subset M^c=\{L \subset
V\}=\exp_o^c(R)\bigcup \mathrm{Cut}(o)$.
\end{eg}

The generalization of the results for real Grassmannians is as follows:

\begin{prop}
For any compact generalized Grassmannian $M^c$ and its noncompact dual
$M^n$, the image of the space-like embedding \textbf{\textit{p}} is
$\exp_o^c(\frac{R}{2})$.
\end{prop}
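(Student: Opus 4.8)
The plan is to reduce the general case to the Real Grassmannian case already treated in Example 5.1, using the fact, established in Section 3.1, that every compact generalized Grassmannian $M^c$ sits inside a Real Grassmannian as (an intersection of) reflective submanifolds, and that both the space-like embedding \textbf{\textit{p}} and the cut-locus embedding \textbf{\textit{f}} behave well under passing to reflective submanifolds. Concretely, I would first recall that $\textbf{\textit{p}} = \textbf{\textit{g}}$ for all generalized Grassmannians (the generalization of Fact 3 of Example 3.1), so it suffices to identify $\mathrm{Im}(\textbf{\textit{f}})$ with $\mathrm{Im}(\textbf{\textit{p}})$; and by Proposition 4.1, we already know $\mathrm{Im}(\textbf{\textit{f}}) = \exp_o^c(\frac{R}{2})$. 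So the real content of the proposition is the assertion that the space-like locus $M^+ \subset M^c$ coincides with $\exp_o^c(\frac{R}{2})$.

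The key steps, in order, would be: (1) Realize $M^c$ as the fixed-point set of commuting isometric involutions $\sigma_1,\dots,\sigma_s$ inside a Real Grassmannian $\widetilde{M}^c = \text{O}(N+M)/\text{O}(N)\text{O}(M)$, with $M^n$ the corresponding fixed-point set inside $\widetilde{M}^n = \widetilde{M}^+$ (space-like subspaces), as in the discussion following Remark 3.1. (2) Invoke Fact 2 of Example 5.1 for $\widetilde{M}$: $\widetilde{M}^+ = \exp_o^c(\frac{\widetilde R}{2})$, where $\widetilde R$ is the corresponding star-shaped region in $T_o\widetilde M^c$. (3) Observe that the cut-locus embedding $\textbf{\textit{f}}$ for $M^c$ is, by Proposition 4.2, the restriction of $\textbf{\textit{f}}$ for $\widetilde M^c$ to $M^n$ — this requires checking that each reflective submanifold occurring here is itself a symmetric R-space (which follows from Remark 3.1, since compact generalized Grassmannians are symmetric R-spaces) so that Proposition 4.2 applies iteratively. (4) Likewise, the space-like embedding \textbf{\textit{p}} for $M^c$ is the restriction of \textbf{\textit{p}} for $\widetilde M^c$ (stated explicitly after Remark 3.1: "this space-like embedding is just the restriction of the space-like embedding for the corresponding real Grassmannians"). (5) Combine: $\mathrm{Im}(\textbf{\textit{f}}_{M}) = \mathrm{Im}(\textbf{\textit{f}}_{\widetilde M})\cap M^c = \exp_o^{c}(\tfrac{\widetilde R}{2})\cap M^c$, and since $\tfrac{\widetilde R}{2}\cap \mathfrak{m}_c = \tfrac{R}{2}$ (the half-region behaves well under taking the abelian subspace of the reflective submanifold, exactly as in the proof of Proposition 4.2 via the basis adapted to $d\tilde\rho$), this equals $\exp_o^c(\tfrac{R}{2})$; and on the other hand it equals $M^+ = \widetilde M^+ \cap M^c = \mathrm{Im}(\textbf{\textit{p}}_M)$.

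Alternatively — and this may be cleaner to write — I would give a direct argument mirroring Example 5.1 without explicitly passing through the ambient Real Grassmannian: pick a Cartan subalgebra $\mathfrak{a}_c \subset \mathfrak{m}_c$ with orthonormal unit-lattice basis $\{\sqrt{-1}A_1,\dots,\sqrt{-1}A_r\}$, write a space-like subspace as $\exp(tX)\cdot o$ with $X = \sum x_i A_i \in \mathfrak{a}_n$ (using $K$-equivariance to reduce to $\mathfrak a_n$), and show by the same trigonometric computation as in Fact 2 that $\exp(tX)\cdot o$ is space-like if and only if $|tx_i| < \pi/4$ for all $i$, i.e. $t < \bar t_0(X)/2$ where $\bar t_0(X) = 1/(2\max|x_i|)$ in appropriately normalized coordinates (Theorem 4.2). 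The point is that the space-like condition $g|_L > 0$ is diagonalized in the coordinates coming from $\mathfrak a_c$, and the threshold for each coordinate being $\pi/4$ is exactly half the threshold $\pi/2$ at which a conjugate/cut point occurs. I expect the main obstacle to be bookkeeping: making precise the claim that the bilinear form $g$ defining space-likeness restricts on the torus orbit $\exp(\mathfrak a_c)\cdot o$ to a product of $\cos/\sin$ terms in the orthonormal coordinates — this is transparent for the Grassmannian but for the Lagrangian, double-Lagrangian and group cases one must check that the extra isometric involutions cutting out $M^c$ are compatible with this diagonalization, which is precisely where the reflective-submanifold reduction (and Proposition 4.2's adapted basis) earns its keep. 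For that reason I would lean toward writing the proof via the reduction to Example 5.1 rather than re-deriving the trigonometry case by case.
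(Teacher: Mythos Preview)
Your proposal is correct and follows essentially the same approach as the paper: realize $M^c$ as an intersection of fixed-point sets of isometric involutions inside a Real Grassmannian $\widetilde M^c$, invoke Fact 2 of Example 5.1 for $\widetilde M^c$, and use the Quast--Tanaka convexity result to identify $\tfrac{R}{2}$ with $\tfrac{\widetilde R}{2}\cap\mathfrak{m}_c$. The paper's proof is marginally more direct in that it does not route through $\textbf{\textit{f}}$ at all --- it simply writes the chain $\exp_o^c(\tfrac{R}{2})=\exp_o^c(\tfrac{R_o}{2}\cap\mathfrak{m}_c)=\widetilde M^+\cap M^c=M^n$, citing Lemma~7 of \cite{QT} for the first equality --- whereas you reach the same conclusion via $\mathrm{Im}(\textbf{\textit{f}}_M)=\mathrm{Im}(\textbf{\textit{f}}_{\widetilde M})\cap M^c$ and the proposition $\textbf{\textit{f}}_N=\textbf{\textit{f}}_M|_N$; but since that proposition is itself proved using the adapted basis from \cite{QT}, the substance is identical.
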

\begin{proof}
Any compact generalized Grassmannian $M^c$ is the fixed point set of
$\text{O}(m+n)/\text{O}(m)\text{O}(n)$ by some isometric involutions
$\sigma_{w_i}$, $i=1,\cdots ,s$, where $\sigma_{w_i}'s$ are the
involutions on $\text{O}(m+n)/\text{O}(m)\text{O}(n)$ defined by
some quadratic forms $w_i$ on it. i.e. $M^c=\bigcap
(\text{O}(m+n)/\text{O}(m)\text{O}(n))^{\sigma_{w_i}}$. For
convenience, use $R_o$ to denote $R$ for $M^c=\text{O}(m+n)/
\text{O}(m)\text{O}(n)$. By Lemma 7 in \cite{QT}, we have
\begin{align*}
\exp_o^c(\frac{R}{2}) & =\exp_o^c(\frac{R_o}{2}\bigcap\mathfrak{m}_c) \\
 &=\text{O}(m,n)/\text{O}(m)\text{O}(n) \bigcap M^c \\
 &=\{space-like ~ subspace ~ in ~ M^c\} \\
 &=M^n.
\end{align*}
\end{proof}

Now for $M^c$ a compact generalized Grassmannian with noncompact dual $M^n$,
the two embeddings $\textbf{\textit{p}},\textbf{\textit{f}}: M^n \hookrightarrow M^c$ are both $K$-equivariant and have the same image
$\mathrm{Im}(\textbf{\textit{p}})=\mathrm{Im}(\textbf{\textit{f}})=\exp_o^c(\frac{R}{2})$. We want to show
$\textbf{\textit{p}}=\textbf{\textit{f}}$ in fact.

\begin{eg}
$\text{O}(1,1)/\text{O}(1)^2 \hookrightarrow
\text{O}(2)/\text{O}(1)^2$.

For $\text{O}(1,1)/\text{O}(1)^2$, we have $\mathfrak{a}_n=\left
\{\left(
\begin{array}
[c]{cc}%
0 & t\\
t & 0
\end{array}
\right)\ : t\in \mathbb{R}\right \}=\mathfrak{m}_n$. After the
exponential map $\exp^n_o$, we get
$\text{O}(1,1)/\text{O}(1)^2=\left \{\left(
\begin{array}
[c]{cc}%
\cosh t & \sinh t\\
\sinh t & \cosh t
\end{array}
\right)\cdot K\ : t\in \mathbb{R}\right \}$. By viewing the manifold as a
Grassmannian, we can use $\left(
\begin{array}
[c]{cc}%
\cosh t & \sinh t\\
\sinh t & \cosh t
\end{array}\right)\cdot K$ to represent a dimension one linear subspace in
$\mathbbm{R}^2$ with basis $\left[
\begin{array}{cc}
\cosh t \\
\sinh t  \end{array} \right]=\left[
\begin{array}{cc}
1 \\
\tanh t \end{array} \right]$.

For $\text{O}(2)/\text{O}(1)^2$, we have $\mathfrak{a}_c=\left
\{\left(
\begin{array}
[c]{cc}%
0 & \theta\\
-\theta & 0
\end{array}
\right)\ : \theta\in \mathbb{R}\right \}=\mathfrak{m}_c$. After the
exponential map $\exp^c_o$, we get $\text{O}(2)/\text{O}(1)^2=\left
\{\left(
\begin{array}
[c]{cc}%
\cos \theta & \sin \theta\\
-\sin \theta & \cos \theta
\end{array}
\right)\cdot K\ : \theta\in \mathbb{R}\right \}$. By viewing the manifold
as a Grassmannian, we can use $\left(
\begin{array}
[c]{cc}%
\cos \theta & \sin \theta\\
-\sin \theta & \cos \theta
\end{array}\right)\cdot K$ to represent a dimension one linear subspace in
$\mathbbm{R}^2$ with basis $\left[
\begin{array}{cc}
\cos \theta \\
-\sin \theta  \end{array} \right]=\left[
\begin{array}{cc}
1 \\
-\tan \theta \end{array} \right]$.

Then $\left(
\begin{array}
[c]{cc}%
\cosh t & \sinh t\\
\sinh t & \cosh t
\end{array}\right)\cdot K$ and $\left(
\begin{array}
[c]{cc}%
\cos \theta & \sin \theta\\
-\sin \theta & \cos \theta
\end{array}\right)\cdot K$ represent the same linear subspace in
$\mathbbm{R}^2$ if and only if $\tanh t=- tan \theta$. Comparing
with the formula of \textbf{\textit{f}}, we see that the space-like
embedding \textbf{\textit{p}} is the same with the embedding \textbf{\textit{f}}.
\end{eg}

\begin{thm}
For generalized Grassmannians, the space-like
embedding \textbf{\textit{p}} and the embedding \textbf{\textit{f}} are the same.
\end{thm}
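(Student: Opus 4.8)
The plan is to reduce the general statement to the case of the real Grassmannian $\text{O}(n+m)/\text{O}(n)\text{O}(m)$, which has already been handled concretely in Example 4.2 and Example 5.2, and then transfer the equality $\textbf{\textit{p}}=\textbf{\textit{f}}$ along reflective submanifolds. Both embeddings are already known to have the same image $\exp_o^c(R/2)$ (Proposition 5.1 together with Proposition 4.1), and both are $K$-equivariant, so the content of the theorem is that they literally agree as maps, not just as sub-manifold inclusions.

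First I would treat the real Grassmannian case directly. By $K$-equivariance of both $\textbf{\textit{p}}$ and $\textbf{\textit{f}}$ it suffices to compare them on $\exp_o^n(\mathfrak{a}_n)$, i.e. on the flat $\mathfrak{a}_n/\mathfrak{a}_c$ picture. On $\mathfrak{a}_n$ the space $M^n$ is the set of graphs $\left[\begin{array}{c} I \\ Y \end{array}\right]$ with $Y$ diagonal, $Y=\mathrm{diag}(\tanh x_1,\dots,\tanh x_n)$ after exponentiating, while on $\mathfrak{a}_c$ the compact Grassmannian is the set of graphs with $Y'=\mathrm{diag}(-\tan\theta_1,\dots,-\tan\theta_n)$. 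The space-like embedding $\textbf{\textit{p}}$ sends the subspace to itself, so it identifies $\tanh x_i = -\tan\theta_i$, i.e.\ $\theta_i = -\arctan(\tanh x_i)$; this is exactly the coordinate form of the map $h$ defining $\textbf{\textit{f}}$ (the factor $\pi$ is absorbed into the normalization $A_i = \pi R_{i,n+i}$ of the unit lattice generators). This matches Example 5.2 coordinatewise, so $\textbf{\textit{p}}=\textbf{\textit{f}}$ for real Grassmannians.

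Then I would invoke the reflective-submanifold machinery. Every compact generalized Grassmannian $M^c$ sits inside some real Grassmannian $\text{O}(n+m)/\text{O}(n)\text{O}(m)$ as (an intersection of) reflective submanifolds which are themselves symmetric R-spaces. By the remarks following Fact 3 of Example 3.1, $\textbf{\textit{p}}_{M^c} = \textbf{\textit{p}}_{\text{O}(n+m)/\text{O}(n)\text{O}(m)}|_{M^n}$, and by Proposition 4.2, $\textbf{\textit{f}}_{M^c} = \textbf{\textit{f}}_{\text{O}(n+m)/\text{O}(n)\text{O}(m)}|_{M^n}$ (one must check the chain of reflective submanifolds stays within the symmetric R-space class, which is exactly what Remark 3.2/3.3 and the classification guarantee for the Grassmannian types). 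Since the two embeddings coincide on the ambient real Grassmannian, their restrictions coincide, giving $\textbf{\textit{p}}_{M^c}=\textbf{\textit{f}}_{M^c}$.

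The main obstacle I anticipate is the bookkeeping in the reduction step: verifying that when one presents a generalized Grassmannian as an intersection of reflective submanifolds of a real Grassmannian, each intermediate reflective submanifold is still a symmetric R-space (so that Proposition 4.2 applies at each stage), and that the compatible Cartan subalgebras can be chosen so that both Proposition 4.2 and the space-like restriction statement use the same $\mathfrak{a}_n\subset\mathfrak{a}$. This is essentially the observation, used implicitly above, that the duality is preserved under passing to reflective submanifolds and that the unit lattice of the submanifold is $\Gamma(G,K)\cap\mathfrak{a}'$; once that is in place the comparison is forced coordinatewise by the explicit real Grassmannian computation. The analytic identity $\tanh x = -\tan\theta \Leftrightarrow \theta = -\arctan(\tanh x)$ is elementary and needs no real work.
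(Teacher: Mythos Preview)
Your proposal is correct and follows essentially the same route as the paper's proof: first verify $\textbf{\textit{p}}=\textbf{\textit{f}}$ on the maximal flat of the real Grassmannian via the explicit identity $\tanh y_i=-\tan x_i$, extend by $K$-equivariance, and then pass to arbitrary generalized Grassmannians by observing that both $\textbf{\textit{p}}$ and $\textbf{\textit{f}}$ restrict correctly along reflective submanifolds. The only discrepancy is in your cross-references (what you call Proposition~4.1 and Proposition~4.2 are the paper's Proposition~4.3 and Proposition~4.4), and the concern you flag about intermediate reflective submanifolds staying in the symmetric R-space class is one the paper itself leaves implicit.
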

\begin{proof}
First we prove it for the real Grassmannians. Follow the
notations in Example 5.1, we have for any $X=\sum_{i=1}^{n}x_i
R_{i,n+i} \in \mathfrak{a}_c$,

$\exp(X)\cdot o=\{(u_1\cos x_{1} , \cdots, u_n\cos x_{n} ,u_1(-\sin
x_{1}) , \cdots, u_{n}(-\sin x_{n}) ,0,\cdots, 0)^{t}:u_i \in
\mathbb{R}\}$.

Similarly, for its noncompact dual, denote $\tilde{R}_{ij}$
$(1\leq i\leq n, n+1\leq j \leq n+m)$ to be the matrix with 1
at $(i,j)$-entry, 1 at $(j,i)$-entry, and 0 otherwise. We have for any $
Y=\sum_{i=1}^{n}y_i \tilde{R}_{i,n+i} \in \mathfrak{a}_n$,

$\exp(Y)\cdot o=\{(u_1\cosh y_{1} , \cdots, u_n\cosh y_{n} ,u_1\sinh
y_{1} , \cdots ,u_{n}\sinh y_{n} ,0,\cdots, 0)^{t}:u_i \in
\mathbb{R}\}$.

Then $\exp(X)\cdot o$ and $\exp(Y)\cdot o$ represent the same
subspace if and only if $\tanh y_i=- \tan x_i$ for each $i$. This
means the two maps \textbf{\textit{p}} and \textbf{\textit{f}} are the same on the maximal flat
$T:=exp(\mathfrak{a}_n)\cdot o$. Since both \textbf{\textit{p}} and \textbf{\textit{f}} are $K$-equivariant, they are the same.

For the other generalized Grassmannians, both the
embeddings \textbf{\textit{p}} and \textbf{\textit{f}} are just the restrictions of the embeddings in
the corresponding real Grassmannian case (Proposition 3.1 for \textbf{\textit{p}} and Proposition 4.3 for \textbf{\textit{f}}).
Since \textbf{\textit{p}} and \textbf{\textit{f}} coincide in the real
Grassmannian case, they are the same for all the generalized
Grassmannians.
\end{proof}

\bigskip

\subsection{Comparing \textbf{\textit{f}} with \textbf{\textit{b}}}
Now we compare the embedding \textbf{\textit{f}} with the Borel embedding \textbf{\textit{b}}
in the Hermitian symmetric space case. There are six types compact Hermitian symmetric spaces, namely, $\text{U}(n+m)/
\text{U}(n)\text{U}(m)$, $\text{SO}(2n)/ \text{U}(n)$,
$\text{Sp}(n)/ \text{U}(n)$, $\text{SO}(n+2)/
\text{SO}(n)\text{SO}(2)$, $\text{E}_6/
(\text{SPin}(10)\text{U}(1)/\mathbb{Z}_4)$ and $\text{E}_7/
(\text{E}_6\text{U}(1)/\mathbb{Z}_3)$.

\begin{thm}
For all the compact Hermitian symmetric spaces except $\text{SO}(n+2)/
\text{SO}(n)\text{SO}(2)$, the Borel embedding  \textbf{\textit{b}} and the
embedding \textbf{\textit{f}} are the same.
\end{thm}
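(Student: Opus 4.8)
The plan is to reduce, via $K$-equivariance, to a single maximal flat, then to reduce that flat — via the polydisc theorem — to the rank one factor $\mathbb{P}^1=\mathrm{SU}(2)/\mathrm{S}(\mathrm{U}(1)^2)$, which is a generalized Grassmannian, so that Theorem 5.1 applies. First I would observe that both $\textbf{\textit{b}}$ and $\textbf{\textit{f}}$ are $K$-equivariant (Lemma 3.3 and Lemma 4.3) and fix the base point $o$, while the Cartan (polar) decomposition gives $M^n=K\cdot\exp_o^n(\mathfrak{a}_n)$. Hence it suffices to prove $\textbf{\textit{b}}=\textbf{\textit{f}}$ on the maximal flat $\exp_o^n(\mathfrak{a}_n)$. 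On this flat $\textbf{\textit{f}}$ is given explicitly by the map $h$ of Section 4.3: writing a point as $\exp_o^n(\sum_i x_iA_i)$, where $A_1,\dots,A_r$ is the orthonormal $\mathbb{Z}$-basis of the unit lattice $\Gamma(G^c,K)$ (so $\sqrt{-1}A_1,\dots,\sqrt{-1}A_r$ generate the lattice in $\mathfrak{a}_c$), one has $\textbf{\textit{f}}(\exp_o^n(\sum_i x_iA_i))=\exp_o^c(-\sum_i a(x_i)\sqrt{-1}A_i)$ with $a(x)=\arctan(\tanh\pi x)/\pi$. So the whole problem is to compute $\textbf{\textit{b}}$ on this same flat.

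Next I would invoke the polydisc theorem for Hermitian symmetric spaces (alluded to at the end of Section 3.2; see \cite{Mok}): the maximal flat of $M^n$ lies in a totally geodesic polydisc $\mathbb{D}^r\subset M^n$ whose compact dual is a totally geodesic polysphere $(\mathbb{P}^1)^r\subset M^c$, and the Borel embedding carries $\mathbb{D}^r$ onto the standard polydisc inside $(\mathbb{P}^1)^r$ as the $r$-fold product of the rank one Borel embedding $\mathbb{D}=\mathrm{SU}(1,1)/\mathrm{S}(\mathrm{U}(1)^2)\hookrightarrow\mathbb{P}^1=\mathrm{SU}(2)/\mathrm{S}(\mathrm{U}(1)^2)$, the $i$-th $\mathbb{P}^1$-factor being metrically normalized so that $A_i$ generates its (rank one) unit lattice. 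Restricting to the $i$-th geodesic factor of the flat, $\textbf{\textit{b}}$ thus coincides with the rank one Borel embedding of $\mathbb{P}^1$. Now $\mathbb{P}^1=\mathrm{SU}(2)/\mathrm{S}(\mathrm{U}(1)^2)=\mathrm{U}(2)/\mathrm{U}(1)\mathrm{U}(1)$ is a complex generalized Grassmannian, so Theorem 5.1 together with $\textbf{\textit{p}}=\textbf{\textit{g}}=\textbf{\textit{b}}$ from Section 3 gives $\textbf{\textit{b}}=\textbf{\textit{f}}$ for $\mathbb{P}^1$; concretely this is the statement that the stereographic/Borel embedding of the disk as the open lower hemisphere (Example 3.2) is exactly the $\arctan\circ\tanh$-reparametrization encoded in $a(x)$ (Example 4.1). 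Feeding this back through the product decomposition shows $\textbf{\textit{b}}=\textbf{\textit{f}}$ on the maximal flat, and $K$-equivariance then finishes the proof for all compact Hermitian symmetric spaces except the excluded one.

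I would also remark that for the three types $\mathrm{U}(n+m)/\mathrm{U}(n)\mathrm{U}(m)$, $\mathrm{Sp}(n)/\mathrm{U}(n)$, $\mathrm{SO}(2n)/\mathrm{U}(n)$ the polydisc theorem can be bypassed entirely: these are generalized Grassmannians (for $\mathrm{SO}(2n)/\mathrm{U}(n)$ with $n$ odd, a reflective submanifold of $\mathrm{U}(2n)/\mathrm{U}(n)\mathrm{U}(n)$ with space-like noncompact dual, by Remark 3.3), so $\textbf{\textit{f}}=\textbf{\textit{p}}$ by Theorem 5.1 (using Proposition 4.3 in the reflective case) while $\textbf{\textit{b}}=\textbf{\textit{g}}=\textbf{\textit{p}}$. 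The genuinely new cases are $\mathrm{E}_6/(\mathrm{Spin}(10)\mathrm{U}(1)/\mathbb{Z}_4)$ and $\mathrm{E}_7/(\mathrm{E}_6\mathrm{U}(1)/\mathbb{Z}_3)$, for which the polydisc reduction is essential.

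The main obstacle is the normalization in the polydisc step: one must check that the $i$-th $\mathbb{P}^1$-factor of the polysphere, with the metric induced from $M^c$, is precisely the symmetric R-space $\mathrm{SU}(2)/\mathrm{S}(\mathrm{U}(1)^2)$ whose unit lattice is $\mathbb{Z}A_i$ — equivalently, that a closed geodesic through $o$ lying in the flat has the same period measured in $M^c$ as measured in the polysphere. This uses that the polysphere contains the maximal flat together with the orthonormality of the unit lattice of a symmetric R-space (Section 4.1), and it is exactly the point where the argument fails for $\mathrm{SO}(n+2)/\mathrm{SO}(n)\mathrm{SO}(2)$, because of the orientation subtlety of Remark 3.3; this is why that space must be excluded from the statement.
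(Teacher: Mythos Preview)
Your argument is correct but takes a different route from the paper for the two exceptional cases. Both you and the paper handle the three classical types $\mathrm{U}(n+m)/\mathrm{U}(n)\mathrm{U}(m)$, $\mathrm{Sp}(n)/\mathrm{U}(n)$, $\mathrm{SO}(2n)/\mathrm{U}(n)$ identically (via $\textbf{\textit{b}}=\textbf{\textit{g}}=\textbf{\textit{p}}=\textbf{\textit{f}}$ from Theorem~5.1, with Remark~3.3 for odd $n$), and both reduce to the maximal flat via $K$-equivariance. The divergence is in how the flat is controlled for $M=\mathrm{E}_6/(\mathrm{Spin}(10)\mathrm{U}(1)/\mathbb{Z}_4)$ and $M=\mathrm{E}_7/(\mathrm{E}_6\mathrm{U}(1)/\mathbb{Z}_3)$: you pass through the polydisc theorem to a product of rank-one $\mathbb{P}^1$'s, while the paper exhibits a single \emph{full-rank} classical reflective submanifold $N\subset M$ which is itself a symmetric R-space --- namely $N=\mathrm{U}(6)/\mathrm{U}(2)\mathrm{U}(4)$ (rank $2$) for the $\mathrm{E}_6$-space and $N=\mathrm{SO}(12)/\mathrm{U}(6)$ (rank $3$) for the $\mathrm{E}_7$-space --- so that $N$ already contains the maximal flat $T$. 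Then $\textbf{\textit{b}}_M|_T=\textbf{\textit{b}}_N|_T=\textbf{\textit{f}}_N|_T=\textbf{\textit{f}}_M|_T$ follows at once from Propositions~3.1 and~4.3 together with the classical case applied to $N$.

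The trade-off is this: the paper's argument stays entirely within the machinery it has developed (Propositions~3.1 and~4.3 plus the reflective-submanifold classification) and completely avoids your normalization obstacle, since a full-rank $N$ shares the maximal flat and hence the unit lattice with $M$. Your polydisc route is conceptually uniform across all types but imports a theorem the paper explicitly sets aside (end of Section~3.2), and the step you flag --- that the $\mathbb{P}^1$-factor periods are exactly the $A_i$ --- does need verification; your sketch (orthogonal $\mathbb{Z}$-basis of an orthonormal lattice is unique up to sign and permutation) is the right idea, but note that the failure for $\mathrm{SO}(n+2)/\mathrm{SO}(n)\mathrm{SO}(2)$ is more precisely that the strongly-orthogonal-root directions of the polydisc are rotated $45^\circ$ relative to the unit-lattice basis $A_1,A_2=\pi(R_{1,n+1}\pm R_{2,n+2})$, not the orientation issue of Remark~3.3.
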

\begin{proof}
For $\text{U}(n+m)/
\text{U}(n)\text{U}(m)$, $\text{SO}(2n)/ \text{U}(n)$,
$\text{Sp}(n)/ \text{U}(n)$, they are all generalized Grassmannians, we have \textbf{\textit{b}}=\textbf{\textit{g}}=\textbf{\textit{p}}.
And also \textbf{\textit{p}}=\textbf{\textit{f}},
hence \textbf{\textit{b}}=\textbf{\textit{f}}.
(Though $\text{SO}(2n)/ \text{U}(n)$ is not in the generalized Grassmannian list when $n$ is odd,  we can also define the space-like embedding \textbf{\textit{p}}
from Remark 3.3, and this \textbf{\textit{p}} is also the same with \textbf{\textit{g}}, \textbf{\textit{f}} obviously.)

For $M=\text{E}_6/
(\text{Spin}(10)\text{U}(1)/\mathbb{Z}_4)$ with $\text{rank}=2$. It
has a reflective submanifold $N=\text{U}(6)/ \text{U}(2)\text{U}(4)$
with $\text{rank}=2$ which is also a symmetric R-space. $M$ and $N$ have the same maximal flat torus $T$. Since $\textbf{\textit{b}}_N=\textbf{\textit{f}}_N$,  $\textbf{\textit{b}}_N=\textbf{\textit{b}}_M|_N$, $\textbf{\textit{f}}_N=\textbf{\textit{f}}_M|_N$
and $T \subset N$, we have $\textbf{\textit{b}}_M|_T=\textbf{\textit{f}}_M|_T$.
Since \textbf{\textit{b}} and \textbf{\textit{f}} are both $K$-equivariant, we have $\textbf{\textit{b}}_M=\textbf{\textit{f}}_M$.

For $M=\text{E}_7/
(\text{E}_6\text{U}(1)/\mathbb{Z}_3)$ with $\text{rank}=3$. It has a
reflective submanifold $N=\text{SO}(12)/ \text{U}(6)$ with
$\text{rank}=3$ which is also a symmetric R-space. The argument is the same as above.
\end{proof}

\bigskip

For $\text{SO}(n+2)/ \text{SO}(n)\text{SO}(2)$, we compute directly.

If $n=1$, it is a rank one symmetric space. In this case, $\text{SO}(3)/ \text{SO}(2)$ is the sphere.
Through the Borel embedding \textbf{\textit{b}}, its noncompact dual $\text{SO}_{0}(1,2)/ \text{SO}(2)$ in it consists of
those points with distance less than $\frac{1}{4}$ of the diameter of the sphere
from the south pole (directly from Remark 3.3). Hence $\mathrm{Im}(\textbf{\textit{b}})=\exp_o^c(\frac{R}{4})$.
Through the embedding \textbf{\textit{f}}, we have $\mathrm{Im}(\textbf{\textit{f}})=\exp_o^c(\frac{R}{2})$.
Of course, $\textbf{\textit{b}}\neq \textbf{\textit{f}}$.

If $n\geq
2$, it is a rank two symmetric space, follow the notations in
example 4.1, the lattice $\Gamma(G,K)$ is generated by
$\pi(R_{1,n+1}+R_{2,n+2})$ and $\pi(R_{1,n+1}-R_{2,n+2})$ which are
orthogonal to each other and have the same length. By direct computations, we have
$\textbf{\textit{b}}\neq \textbf{\textit{f}}$, but $\mathrm{Im}(\textbf{\textit{b}})=\mathrm{Im}(\textbf{\textit{f}})=\exp_o^c(\frac{R}{2})$.

\bigskip

\subsection{Comparing \textbf{\textit{f}} with \textbf{\textit{g}}}
Finally, we compare the embedding \textbf{\textit{f}} with the generalized embedding \textbf{\textit{g}},
they both coincide with the space-like embedding \textbf{\textit{p}} in the generalized
Grassmannian case  and the Borel embedding \textbf{\textit{b}} in the Hermitian case (except $\text{SO}(n+2)/
\text{SO}(n)\text{SO}(2)$),
when are they equal to each other?

\begin{thm}
For all the symmetric R-spaces except $\text{SO}(n+2)/
\text{SO}(n)\text{SO}(2)$ and $\text{SO}(n+1)\times \text{SO}(m+1)/
S(\text{O}(n) \times \text{O}(m))$, \textbf{\textit{g}}=\textbf{\textit{f}}.
\end{thm}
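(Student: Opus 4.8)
The plan is to exploit Takeuchi's structure theorem quoted in Section 4.1: every irreducible symmetric R-space is either an irreducible compact Hermitian symmetric space or a compact connected real form of one, and every real form arises as a reflective submanifold of its ambient Hermitian symmetric space. Since both \textbf{\textit{f}} and \textbf{\textit{g}} are $K$-equivariant (Lemma 3.3 and Lemma 4.3) and both are compatible with passing to reflective submanifolds that are again symmetric R-spaces (Proposition 3.1 and Proposition 4.3), the whole statement reduces, after passing to irreducible factors, to two families of cases: (a) the irreducible compact Hermitian symmetric spaces, and (b) the irreducible real forms. Case (a) is essentially already done: Theorem 5.2 shows \textbf{\textit{b}}=\textbf{\textit{f}} for all compact Hermitian symmetric spaces except $\text{SO}(n+2)/\text{SO}(n)\text{SO}(2)$, and \textbf{\textit{b}}=\textbf{\textit{g}} by definition (Section 3.2); so \textbf{\textit{g}}=\textbf{\textit{f}} holds for those, and $\text{SO}(n+2)/\text{SO}(n)\text{SO}(2)$ is excluded in the hypothesis.

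For case (b), the real forms, I would go through Takeuchi's list. The generalized Grassmannians among them are handled by Theorem 5.1 (\textbf{\textit{p}}=\textbf{\textit{f}}) together with \textbf{\textit{p}}=\textbf{\textit{g}} (Section 3.1, ``generalization of Fact 3''). This covers the real and quaternionic Grassmannians, the Lagrangian-type spaces $\text{U}(n)/\text{O}(n)$, $\text{U}(2n)/\text{Sp}(n)$, $\text{SO}(2n)/\text{U}(n)$ (including $n$ odd, via Remark 3.3), and the classical groups $\text{SO}(m),\text{U}(m),\text{Sp}(m)$ which appear as the fourth-type ``Grassmannians''; it also covers the Cayley projective plane and the exceptional real forms once one verifies they sit as reflective submanifolds of exceptional Hermitian symmetric spaces on which \textbf{\textit{g}}=\textbf{\textit{f}} is known (mimicking the $\text{E}_6$/$\text{E}_7$ argument in Theorem 5.2, reducing to a common maximal flat). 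The two genuinely excluded spaces, $\text{SO}(n+2)/\text{SO}(n)\text{SO}(2)$ and $\text{SO}(n+1)\times\text{SO}(m+1)/S(\text{O}(n)\times\text{O}(m))$, are exactly the cases where the direct computation at the end of Section 5.2 shows \textbf{\textit{b}}\neq\textbf{\textit{f}} (hence \textbf{\textit{g}}\neq\textbf{\textit{f}}) even though the images agree; the second one is a real form of the first, consistent with the reflective-submanifold compatibility.

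Concretely, the key steps in order: (1) reduce to the irreducible case using that both embeddings respect Riemannian products (a direct check on the definitions of \textbf{\textit{g}} and of $h$); (2) for an irreducible symmetric R-space $M\neq\text{SO}(n+2)/\text{SO}(n)\text{SO}(2)$, if $M$ is Hermitian, invoke Theorem 5.2 plus \textbf{\textit{b}}=\textbf{\textit{g}}; (3) if $M$ is a generalized Grassmannian, invoke Theorem 5.1 plus \textbf{\textit{p}}=\textbf{\textit{g}}; (4) for the remaining exceptional real forms, realize each as a reflective submanifold $N\subset M'$ of a compact Hermitian symmetric space $M'$ with $\text{rank}(N)=\text{rank}(M')$ and with a common maximal flat torus $T\subset N$; then $\textbf{\textit{g}}_N=\textbf{\textit{g}}_{M'}|_N$, $\textbf{\textit{f}}_N=\textbf{\textit{f}}_{M'}|_N$, and since $\textbf{\textit{g}}_{M'}=\textbf{\textit{f}}_{M'}$ these agree on $T$, whence by $K$-equivariance on all of $N$. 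The main obstacle is step (4) for $\text{SO}(n+1)\times\text{SO}(m+1)/S(\text{O}(n)\times\text{O}(m))$, which is precisely why it must be excluded: its unit lattice is orthonormal but, as the real-Grassmannian-style computation in Example 4.1 shows, the generators inherited from $\text{SO}(n+m+2)/\text{SO}(n+m)\text{SO}(2)$ are of the ``$e_i\pm e_j$'' type, so the restriction of $h_{M'}$ is not expressed in the standard orthonormal basis of the subspace and $\textbf{\textit{f}}_N\neq\textbf{\textit{f}}_{M'}|_N$; the analogous failure for $\text{SO}(n+2)/\text{SO}(n)\text{SO}(2)$ itself is the rank-two computation already indicated at the end of Section 5.2. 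So the proof is really an organized bookkeeping over Takeuchi's classification, with the one delicate point being to confirm that every case \emph{other} than the two listed does admit a rank-preserving reflective realization in which the inherited unit lattice stays orthonormal in the right basis.
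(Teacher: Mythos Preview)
Your overall strategy---Takeuchi's dichotomy plus the reflective-compatibility Propositions 3.1 and 4.3---is exactly the paper's approach. The paper, however, handles case (b) more uniformly: once \textbf{\textit{g}}$=$\textbf{\textit{f}} is known on every irreducible compact Hermitian symmetric space $M$ except $\text{SO}(n+2)/\text{SO}(n)\text{SO}(2)$, \emph{every} real form $N\subset M$ inherits \textbf{\textit{g}}$_N=$\textbf{\textit{g}}$_M|_N=$\textbf{\textit{f}}$_M|_N=$\textbf{\textit{f}}$_N$ immediately from Propositions 3.1 and 4.3, with no need to split into generalized Grassmannians versus exceptional real forms, no need for Theorem 5.1 a second time, and no rank condition on the pair $(N,M)$.

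One point in your write-up is misdiagnosed: you say the product space $\text{SO}(n+1)\times\text{SO}(m+1)/S(\text{O}(n)\times\text{O}(m))$ is excluded because $\textbf{\textit{f}}_N\neq\textbf{\textit{f}}_{M'}|_N$. But Proposition 4.3 proves $\textbf{\textit{f}}_N=\textbf{\textit{f}}_{M'}|_N$ precisely in the ``$e_{2j-1}+e_{2j}$'' lattice case you describe; the restriction identity does hold. The actual obstruction is upstream: the ambient Hermitian space $M'=\text{SO}(n+m+2)/\text{SO}(n+m)\text{SO}(2)$ is the one Hermitian case where \textbf{\textit{g}}$_{M'}\neq$\textbf{\textit{f}}$_{M'}$, so the chain of equalities breaks at that link, not at the restriction step. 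This does not damage your proof of the theorem as stated (which only asserts \textbf{\textit{g}}$=$\textbf{\textit{f}} on the non-excluded list), but it is worth getting the mechanism right.
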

\begin{proof}
For any symmetric R-space $N$, it is a compact Hermitian symmetric space or it is a real form of a compact Hermitian
symmetric space $M$.

When it is a compact Hermitian symmetric space except $\text{SO}(n+2)/
\text{SO}(n)\text{SO}(2)$, we have \textbf{\textit{g}}=\textbf{\textit{b}}=\textbf{\textit{f}}.

When it is a real form of a compact Hermitian
symmetric space $M$, that means it is a reflective submanifold of $M$ under a antiholomorphic isometric involution.
By Proposition 3.1 and Proposition 4.4, we have $\textbf{\textit{g}}_N=\textbf{\textit{g}}_M|_N$ and $\textbf{\textit{f}}_N=\textbf{\textit{f}}_M|_N$.
If $\textbf{\textit{g}}_M=\textbf{\textit{f}}_M$, then we have $\textbf{\textit{g}}_N=\textbf{\textit{f}}_N$.
From the list of symmetric R-spaces, $\text{SO}(n+m+2)/
\text{SO}(n+m)\text{SO}(2)$ has real forms $\text{SO}(n+1)\times \text{SO}(m+1)/
S(\text{O}(n) \times \text{O}(m))$. Hence $\text{SO}(n+1)\times \text{SO}(m+1)/
S(\text{O}(n) \times \text{O}(m))$ is the only exception.
\end{proof}

Combining all the above results, we have

\begin{prop}
For $\text{SO}(3)/ \text{SO}(2)$,  $\mathrm{Im}(\textbf{\textit{f}})=\exp_o^c(\frac{R}{2})$ and $\mathrm{Im}(\textbf{\textit{g}})=\exp_o^c(\frac{R}{4})$.

For all the symmetric R-spaces except $\text{SO}(3)/ \text{SO}(2)$,
$\mathrm{Im}(\textbf{\textit{f}})=\mathrm{Im}(\textbf{\textit{g}})=\exp_o^c(\frac{R}{2})$.
\end{prop}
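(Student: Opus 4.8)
The plan is to assemble the statement from results already in hand. The $\textbf{\textit{f}}$-claims are immediate: Proposition 4.3 gives $\mathrm{Im}(\textbf{\textit{f}})=\exp_o^c(\frac{R}{2})$ for \emph{every} symmetric R-space, in particular for $\text{SO}(3)/\text{SO}(2)$, so only $\mathrm{Im}(\textbf{\textit{g}})$ remains to be identified, and it must be split into a generic case plus the two families $\text{SO}(n+2)/\text{SO}(n)\text{SO}(2)$ and $\text{SO}(n+1)\times\text{SO}(m+1)/S(\text{O}(n)\times\text{O}(m))$ excluded in Theorem 5.3.

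For $\mathrm{Im}(\textbf{\textit{g}})$ I would argue in three steps. (i) If the symmetric R-space is outside the exceptional list of Theorem 5.3 then $\textbf{\textit{g}}=\textbf{\textit{f}}$ as maps, so $\mathrm{Im}(\textbf{\textit{g}})=\mathrm{Im}(\textbf{\textit{f}})=\exp_o^c(\frac{R}{2})$. (ii) For $\text{SO}(n+2)/\text{SO}(n)\text{SO}(2)$, which is Hermitian, $\textbf{\textit{g}}=\textbf{\textit{b}}$ (Section 3.2), and the direct computations in Section 5.2 give $\mathrm{Im}(\textbf{\textit{b}})=\exp_o^c(\frac{R}{4})$ when $n=1$ --- this is exactly the asserted exception $\text{SO}(3)/\text{SO}(2)$ --- and $\mathrm{Im}(\textbf{\textit{b}})=\exp_o^c(\frac{R}{2})$ when $n\geq 2$. (iii) For $N=\text{SO}(n+1)\times\text{SO}(m+1)/S(\text{O}(n)\times\text{O}(m))$ I would use that $N$ is a real form, hence a reflective submanifold, of $M=\text{SO}(n+m+2)/\text{SO}(n+m)\text{SO}(2)$ with $n+m\geq 2$. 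By Propositions 3.1 and 4.4 the restrictions $\textbf{\textit{g}}_N=\textbf{\textit{g}}_M|_{N^n}$ and $\textbf{\textit{f}}_N=\textbf{\textit{f}}_M|_{N^n}$ hold; since both $\textbf{\textit{g}}_M$ and $\textbf{\textit{f}}_M$ are equivariant for the reflecting involution $\rho$ (for $\textbf{\textit{g}}_M$ because $\rho$ preserves $G^c$ and the parabolic $P$, for $\textbf{\textit{f}}_M$ because $d\rho$ preserves the maximal flat and the odd function defining $h$), injectivity forces $\textbf{\textit{g}}_M(N^n)=\textbf{\textit{g}}_M(M^n)\cap N^c$ and similarly for $\textbf{\textit{f}}_M$. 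Combining this with step (ii), which gives $\mathrm{Im}(\textbf{\textit{g}}_M)=\mathrm{Im}(\textbf{\textit{f}}_M)=\exp_o^c(\frac{R_M}{2})$ for $n+m\geq 2$, and with Proposition 4.3 applied to $N$, one gets $\mathrm{Im}(\textbf{\textit{g}}_N)=\exp_o^c(\frac{R_M}{2})\cap N^c=\mathrm{Im}(\textbf{\textit{f}}_N)=\exp_o^c(\frac{R_N}{2})$. Since every symmetric R-space is a compact Hermitian symmetric space or a real form of one (Section 4.1), (i)--(iii) exhaust all cases and $\text{SO}(3)/\text{SO}(2)$ is the only one where $\mathrm{Im}(\textbf{\textit{g}})$ is not $\exp_o^c(\frac{R}{2})$.

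The main obstacle is step (iii): because $N$ lies in the exceptional list of Theorem 5.3 one cannot invoke $\textbf{\textit{g}}_N=\textbf{\textit{f}}_N$ and must instead work at the level of images. The crux is the equality $\textbf{\textit{g}}_M(N^n)=\textbf{\textit{g}}_M(M^n)\cap N^c$ --- that a point of the ambient image lying on the reflective submanifold is already hit from $N^n$ --- whose proof requires upgrading the bare ``restriction'' content of Proposition 3.1 to genuine $\langle\rho\rangle$-equivariance of $\textbf{\textit{g}}_M$ and then using its injectivity. Everything else is bookkeeping over the classification of symmetric R-spaces.
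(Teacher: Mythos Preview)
Your proposal is correct and follows exactly the approach the paper takes: the paper's entire proof of this proposition is the single sentence ``Combining all the above results, we have'', so your case analysis (generic case via Theorem~5.3, the Hermitian family $\text{SO}(n+2)/\text{SO}(n)\text{SO}(2)$ via the direct computations in Section~5.2, and the real-form family via restriction from the ambient Hermitian space) is precisely the intended expansion.

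Your treatment of step~(iii) actually supplies detail the paper omits. The paper never explicitly computes $\mathrm{Im}(\textbf{\textit{g}})$ for $\text{SO}(n+1)\times\text{SO}(m+1)/S(\text{O}(n)\times\text{O}(m))$; it is left implicit in ``combining all the above''. Your observation that the bare restriction statements $\textbf{\textit{g}}_N=\textbf{\textit{g}}_M|_{N^n}$ and $\textbf{\textit{f}}_N=\textbf{\textit{f}}_M|_{N^n}$ from Propositions~3.1 and~4.4 give only one inclusion, and that the reverse inclusion $\mathrm{Im}(\textbf{\textit{g}}_M)\cap N^c\subseteq \textbf{\textit{g}}_M(N^n)$ requires $\rho$-equivariance of $\textbf{\textit{g}}_M$ plus injectivity, is exactly right. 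Your justification that $\rho$ preserves $P$ is also correct: since $\rho$ fixes the base point $o$, and $P$ is the stabilizer of $o$ in $G^{\mathbb C}$ (under $M^c\cong G^{\mathbb C}/P$), the lift of $\rho$ to $G^{\mathbb C}$ must send $P$ to itself. Thus your argument closes the case the paper leaves tacit.
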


Since both the constructions of \textbf{\textit{g}} and \textbf{\textit{f}} use the intrinsic characterizations of symmetric R-spaces among compact symmetric spaces,
they can't be generalized to all compact symmetric spaces.

\bigskip

\bigskip

School of Science, East China University of Science and Technology, Meilong
Road 130, Shanghai, China

E-mail address: yxchen76@ecust.edu.cn

\bigskip

The Department of Mathematics, Jinan University, Guangzhou, Guangdong, China

E-mail address: hyd74@qq.com

\bigskip

The Institute of Mathematical Sciences and Department of Mathematics, The
Chinese University of Hong Kong, Shatin, N.T., Hong Kong

E-mail address: leung@math.cuhk.edu.hk

\end{document}